\newlength{\figwidth}
\newlength{\figheight}
\newlength{\philwidth}
\def\mbb#1{\mathbb{#1}}
\def\lrs#1{\ensuremath{\left[{#1}\right]}}%
\renewcommand{\r}[1]{\ensuremath{\big|_{#1}}}
\newcommand{\norm}[1]{\ensuremath{\|#1\|}}
\newcommand{\E}{\ensuremath{\mathbb{E}}}
\newcommand{\R}{\ensuremath{\mathbb{R}}}
\newcommand{\C}{\ensuremath{\mathbb{C}}}
\newcommand{\N}{\ensuremath{\mathbb{N}}}
\def\Prblr#1{\mbb{P}\kern-2pt\lrs{{#1}}}%
\newtheorem{cor}{Corollary}
\newtheorem{remark}{Remark}
\newtheorem{prop}{Proposition}
\newtheorem{lemma}{Lemma}
\newtheorem{theorem}{Theorem}
\newtheorem{definition}{Definition}
\newtheorem{example}{Example}
\definecolor{darkorange}{rgb}{1.00,0.50,0.00}
\begin{document}

\title{A Random Matrix--Theoretic Approach to Handling Singular Covariance Estimates}
\author{Thomas L. Marzetta, Gabriel H. Tucci and Steven H. Simon
\thanks{
T. L. Marzetta and G. H. Tucci
  are with Bell Laboratories,
  Alcatel--Lucent, 600 Mountain Ave, Murray Hill, NJ 07974,
  e-mail: marzetta@alcatel-lucent.com, gabriel.tucci@alcatel-lucent.com. 
  Steven H. Simon is with University of Oxford, e-mail: s.simon1@physics.ox.ac.uk.}}

\maketitle

\begin{abstract}
In many practical situations we would like to estimate the covariance matrix of a set of variables from an
insufficient amount of data. More specifically, if we have a set of $N$ independent, identically distributed measurements of an
$M$ dimensional random vector the maximum likelihood estimate is the sample covariance matrix. Here we consider the case where $N<M$ such that this estimate is singular (non--invertible) and therefore fundamentally bad. We present a radically new 
approach to deal with this situation. Let $X$ be the $M\times N$ data matrix, where the columns are 
the $N$ independent realizations of the random vector with covariance matrix $\Sigma$. Without loss of generality, 
and for simplicity, we can assume that the random variables have zero mean. We would like 
to estimate $\Sigma$ from $X$. Let $K$ be the classical sample covariance matrix. Fix a parameter $1\leq L\leq N$ and consider an ensemble of $L\times M$ random unitary matrices, $\{\Phi\}$, having Haar probability measure (isotropically random). Pre-- and post--multiply $K$ by $\Phi$, and by the conjugate transpose of $\Phi$ respectively, to produce a non--singular $L\times L$ reduced dimension covariance estimate. A new estimate for $\Sigma$, denoted by $\mathrm{cov}_L(K)$, is obtained by a) projecting the reduced covariance estimate out (to $M\times M$) through pre-- and post--multiplication by the conjugate transpose of $\Phi$, and by $\Phi$ respectively, and b) taking the expectation over the unitary ensemble. Another new estimate (this time for $\Sigma^{-1}$), $\mathrm{invcov}_L(K)$, is obtained by a) inverting the reduced covariance estimate, b) projecting the inverse out (to $M\times M$) through pre-- and post--multiplication by the conjugate transpose of $\Phi$, and by $\Phi$ respectively, and c) taking the expectation over the unitary ensemble. We show that the estimate $\mathrm{cov}$ is equivalent to diagonal loading. Both estimates $\mathrm{invcov}$ and $\mathrm{cov}$ retain the original eigenvectors and make nonzero the formerly zero eigenvalues. We have a closed form analytical expression for $\mathrm{invcov}$ in terms of its eigenvector and eigenvalue decomposition. We motivate the use of $\mathrm{invcov}$ through applications to linear estimation, supervised learning, and high--resolution spectral estimation. We also compare the performance of the estimator $\mathrm{invcov}$ with respect to diagonal loading.

\end{abstract}

\begin{IEEEkeywords}
Singular Covariance Matrices, Random Matrices, Limiting Distribution, Sensor Networks, Isotropically Random, Stiefel Manifold, Curse of Dimensionality
\end{IEEEkeywords}

\section{Introduction}%

\par The estimation of a covariance matrix from an insufficient amount of
data is one of the most common multivariate problems in statistics, signal processing, and
learning theory. Inexpensive sensors permit ever more
measurements to be taken simultaneously. Thus the dimensions of
feature vectors are growing. But typically the number of independent
measurements of the feature vector are not increasing at a commensurate
rate. Consequently, for many problems, the sample covariance matrix is almost always singular (non--invertible). 
More precisely, given a set of independent multivariate Gaussian feature vectors, the
sample covariance matrix is a maximum likelihood estimate. When the
number of feature vectors is smaller than their dimension then the
estimate is singular, and the sample covariance is a fundamentally bad
estimate in the sense that the maximum likelihood principle yields a non--unique
estimate having infinite likelihood. The sample covariance finds
linear relations among the random variables when there may be
none. The estimates for the larger eigenvalues are typically too big,
and the estimates for the small eigenvalues are typically too small.

\par The conventional treatment of covariance singularity artificially
converts the singular sample covariance matrix into an invertible
(positive--definite) covariance by the simple expedient of adding a
positive diagonal matrix, or more generally, by taking a linear
combination of the sample covariance and an identity matrix. This
procedure is variously called ``diagonal loading'' or 
``ridge regression'' \cite{ridge}, \cite{Edelman2}. The resulting covariance has the same eigenvectors as the sample
covariance, and eigenvalues which are uniformly scaled and shifted versions of
the sample covariance eigenvalues. The method of Ledoit and Wolf
\cite{LW} automatically chooses the combining coefficients for diagonal
loading.

\par We propose a radically different alternative to diagonal loading which
is based on an ensemble of dimensionality reducing random unitary
matrices. The concept is that the unitary matrix multiplies the
feature vectors to produce shortened feature vectors, having dimension
significantly smaller than the number of feature vectors, which
produce a statistically meaningful and invertible covariance estimate. The
covariance estimate is used to compute an estimate for the ultimate
quantity or quantities of interest. Finally this estimate is averaged
over the ensemble of unitary matrices. We consider two versions of this scheme which we call $\mathrm{cov}$ and $\mathrm{incov}$. 
We show that the estimate $\mathrm{cov}$ is equivalent to diagonal loading. Both estimates $\mathrm{invcov}$ and $\mathrm{cov}$ retain the original eigenvectors and make nonzero the formerly zero eigenvalues. We have a closed form analytical expression for $\mathrm{invcov}$ in terms of its eigenvector and eigenvalue decomposition. We motivate the use of $\mathrm{invcov}$ through applications to linear estimation, supervised learning, and high--resolution spectral estimation. We also compare the performance of the estimator $\mathrm{invcov}$ with respect to diagonal loading.


\par Throughout the paper we will denote by $A^{*}$ the complex conjugate transpose of the matrix $A$. $I_{N}$ will represent the $N\times N$ 
identity matrix. We let $\mathrm{Tr}$ be the non--normalized trace for square matrices, defined by,
$$
\mathrm{Tr}(A):=\sum_{i=1}^{N}{a_{ii}},
$$
where $a_{ii}$ are the diagonal elements of the $N\times N$ matrix $A$. We also let $\mathrm{tr}$ be the normalized trace, defined by $\mathrm{tr}(A)=\frac{1}{N}\mathrm{Tr}(A)$.

\section{New Approach to Handling Covariance Singularity}

We begin with a set of $N$ independent identically distributed
measurements of an $M$ dimensional random vector where $N < M$. We
introduce an ensemble of $L \times M$ random unitary matrices, such
that $L \leq N$. The unitary matrix multiplies the feature vectors to produce a set of $N$ 
feature vectors of dimension $L$ from which we obtain an invertible sample covariance
matrix. The dimensionality reduction process is reversible (i.e., no
information is thrown away) provided it is done for a sufficient
multiplicity of independent unitary matrices. The key question is what
to do with the ensemble of reduced dimension covariance estimates.

\subsection{Notation and sample covariance}

We are given a $M \times N$ data matrix, $X$, the columns of which
comprise $N$ independent identically distributed realizations of a
random vector. For convenience we assume that the random vector is
zero-mean. We also will assume that the random vector is
circularly-symmetric complex. The sample covariance is
\begin{equation}
K = \frac{1}{N}XX^{*}.
\end{equation}
We are interested in the case where $N < M$. Consequently the sample
covariance is singular with rank equal to $N$.

\subsection{Dimensionality--reducing ensemble}

We introduce an ensemble of $L \times M$ random unitary matrices,
${ \Phi }$ where $L \leq N$ and $\Phi \Phi^* = I_L$, where $I_L$ is
the $L \times L$ identity matrix. The multiplication of the data
matrix by the unitary matrix results in a data matrix of reduced
dimension, $L \times N$, which in turn produces a statistically
meaningful sample covariance matrix provided that $L$ is
sufficiently small compared with $N$,
\begin{equation}
\frac{1}{N} \Phi X X^* \Phi^* = \Phi K \Phi^* .
\label{reduced_cov}
\end{equation}

We need to specify the distribution of the random unitary matrix. One
possibility would be to use a random permutation matrix, the effect of
which would be to discard all but $L$ of the $M$ components of the
data vectors. Instead we utilize the Haar measure (sometimes
called the ``isotropically random'' distribution \cite{marzetta}). A
fundamental property of the Haar distribution is its invariance to multiplication
of the random unitary matrix
by an unrelated unitary matrix. Specifically, let $p(\Phi)$ be the joint
probability density for the components of the unitary matrix, and let
$\Theta$ be any unrelated $M \times M$ unitary matrix (i.e., either
$\Theta$ is deterministic, or it is statistically independent of
$\Phi$). Then $\Phi$ has Haar measure if and only if for all unitary $\Theta$
\begin{equation}
p(\Phi \Theta) = p(\Phi) .
\label{isorandom}
\end{equation}
Compared with the random permutation matrix, the Haar measure is more
flexible as it permits linear constraints to be imposed.

\subsection{Two nonsingular covariance estimates}

The generation of the ensemble of reduced-dimension covariance
estimates (\ref{reduced_cov}) is well--motivated. It is less obvious what
to do with this ensemble. We have investigated two approaches: $\mathrm{cov}$
which yields directly a non--singular estimate for the $M \times M$ covariance
matrix, and $\mathrm{invcov}$ which yields directly an estimate for the inverse $M
\times M$ covariance matrix.

\subsubsection{$\mathrm{cov}$\,}

If we project the $L \times L$ covariance (\ref{reduced_cov}) out to a
$M \times M$ covariance using the same random unitary matrix, and then
take the expectation over the unitary ensemble, we obtain
the following:
\begin{equation}
\mathrm{cov}_L(K) = \E_\Phi \Big ( \Phi^* (\Phi K \Phi^*) \Phi\Big).
\end{equation}
This expectation can be evaluated in closed form (either by
evaluating fourth moments, or by using Schur polynomials as shown later):
\begin{equation}
\mathrm{cov}_L(K) = \frac{L}{(M^2-1)M} \Big[(M L
-1)K + (M-L) \mathrm{Tr} (K) I_{M}\Big].
\label{upstairs_mean}
\end{equation}
Thus the procedure $\mathrm{cov}$ is equivalent to diagonal loading for a
particular pair of loading parameters. The dimensionality parameter,
$L$, determines the amount of diagonal loading. It is reasonable
to re-scale the covariance expression (\ref{upstairs_mean}) by the factor $M/L$
because the dimensionality reduction yields shortened feature vectors
whose energy is typically $L / M$ times the energy of the original
feature vectors. Note that we use the term energy to denote the $\norm{x}_{2}^{2}$ of a vector $x$. If the covariance is scaled in this manner then the
trace of the sample covariance is preserved.

Although it is both interesting and surprising that $\mathrm{cov}$ is equivalent
to diagonal loading, we instead pursue an approach which is better
motivated and which promises more
compelling action.

\subsubsection{$\mathrm{invcov}$}

We first invert the $L \times L$ covariance (\ref{reduced_cov}) (which
is invertible with probability one), project out to $M \times M$ using
the same unitary matrix, and then take the expectation over the
unitary ensemble to obtain the following:
\begin{equation}
\mathrm{invcov}_L(K) = \E_\Phi \Big( \Phi^* ( \Phi K
\Phi^*)^{-1} \Phi\Big).
\label{ICOV}
\end{equation}
The estimate $\rm{invcov}$ (as well as $\rm{cov}$) preserves the eigenvectors. In other words, if we perform the eigenvector and eigenvalue
decomposition,
\begin{equation}
K = U D U^{*},
\label{K_eigen}
\end{equation}
where $D$ is the $M \times M$ diagonal matrix, whose diagonals are the
eigenvalues, ordered from largest to smallest, and $U$ is the $M
\times M$ unitary matrix of eigenvectors, then we prove (in Section \ref{derivation}) that 
\begin{equation}
\mathrm{invcov}_L(K)=U\mathrm{invcov}_L(D)U^{*}.
\end{equation}
Therefore it is enough to compute $\mathrm{invcov}_L(D)$. We also show that $\mathrm{invcov}_L(D)$ is a diagonal matrix. Moreover, we show that if $D=\mathrm{diag}(D_{N},0_{M-N})$ where $D_{N}=\mathrm{diag}(d_{1},\ldots,d_{N})$ is the matrix with the non--zero entries. The matrix $\mathrm{invcov}_{L}(D)$ is a diagonal matrix that can be decomposed as 
$$
\mathrm{invcov}_{L}(D)=\mathrm{diag}(\lambda_{1},\ldots,\lambda_{N},\mu I_{M-N}).
$$
In other words all the zero--eigenvalues are transformed to a non--zero constant $\mu$. In Section \ref{main_res} we prove an exact expression for the entries of $\mathrm{invcov}_{L}(D)$. More specifically we prove that  
$$
\mu=\mathrm{Tr}\Big[\E\big((X^{*}D_{N}X)^{-1}\big)\Big]
$$ 
where the average is taken over the ensemble of all $N\times L$ Gaussian random matrices $X$ with independent and complex entries with zero mean and unit variance. Proposition \ref{mainprop} (in Section \ref{main_res}) gives us an explicit formula for $\mu$. On the other hand, using Lemma \ref{lemma2} (in the same Section) we prove that
$$
\lambda_{k}=\frac{\partial}{\partial d_{k}}\int_{\Omega_{L,N}}{\mathrm{Tr}\log(\Phi^{*}D_N\Phi)\,d\phi}.
$$
where 
$$
\int_{\Omega_{L,N}}{\mathrm{Tr}\log(\Phi^{*}D_N\Phi)\,d\phi}
$$
can be explicitly computed using Theorem \ref{main} in Section \ref{main_res}. Therefore, given $D$ we obtain close form expressions for all the entries of the matrix $\mathrm{invcov}_{L}(D)$ for every $M,N$ and $L$.

\par In Section \ref{asymp} using Free Probability techniques we prove asymptotic formulas for the entries of $\mathrm{invcov}_{L}(D)$ for large values of $N$.

\par We focus the remainder of the paper on some potential applications of $\mathrm{invcov}$, the derivation of its fundamental properties, and how to
compute it. 

\section{Potential Applications of $\mathrm{invcov}$}

Typically neither the covariance matrix nor its inverse is of
direct interest. Rather some derived quantity is desired. Here we discuss
three potential applications where $\mathrm{invcov}$ arises in a natural way. 

\subsection{Design of a linear estimator from training data}

The problem is to design a minimum mean square linear estimator for a
$M_x \times 1$ random vector $x$ given an observation of a
$M_y \times 1$ random vector $y$. Exact statistics are not
available; instead we have to work with statistics that are estimated
from a set of training data. If the statistics were
available then the optimum estimator would be (assuming that the vectors
have zero--mean)
\begin{equation}
\hat{x}(y) = K_{xy}K_y^{-1} y,
\label{opt_est}
\end{equation}
where $K_{y}$ is the covariance matrix of vector $y$ and $K_{xy}$ is the cross--covariance matrix of vectors $x$ and $y$. In this case, the 
mean-square error is 
\begin{eqnarray}
\rm{MMSE} &=& \E \left( \left( \hat{x}(y) -x\right) \left( \hat{x}(y) -x\right)^*  \right) \nonumber \\
&=& K_x - K_{xy}K_y^{-1}K_{yx}.
\label{opt_mmse}
\end{eqnarray}
For the design of the estimator we have training data comprising $N$
independent 
joint realizations of $x$ and $y$: $X$ ($M_x
\times N$) and $Y$ ($M_y \times N$), where $N < M_y$.

We introduce an ensemble of $L \times M_y$ isotropically random
unitary matrices, $\Phi$, where $L \leq N$. We reduce the dimensionality
of the observed 
vector, $y \rightarrow \Phi y$, and the training
set, $Y \rightarrow \Phi Y$, and we estimate the relevant covariances
as follows,
\begin{equation}
K_{x,\Phi y} = \frac{1}{N} X Y^* \Phi^* ,
\label{covhat_x_Phiy}
\end{equation}
\begin{equation}
K_{\Phi y} = \frac{1}{N} \Phi Y Y^* \Phi^* .
\label{covhat_Phiy}
\end{equation}
We estimate $x$ given the reduced observation $\Phi y$ 
by treating the covariance estimates
(\ref{covhat_x_Phiy}) and (\ref{covhat_Phiy}) as if they were correct:
\begin{eqnarray}
\hat{x}(\Phi y) & = & K_{x,\Phi y}
K_{\Phi y}^{-1} \Phi y \nonumber \\ 
& = & X Y^* \Phi^* \left( \Phi Y Y^*
\Phi^*  \right)^{-1} \Phi y .
\label{hatx_Phiy}
\end{eqnarray}
The mean-square error of this estimator conditioned on the random
unitary matrix, $\Phi$,  is found by taking an expectation with respect
to the training data, $\{ 
X,Y \}$, the observation, $y$, (which is independent of the training
data), and the true value of the unknown vector, $x$:
\begin{equation}\label{mse_hatx_Phiy}
\E\left\{ \left[ \hat{x}(\Phi y)  -
x \right] \left[ \hat{x}(\Phi y) - x  \right]^*
\mid \Phi   \right\} =
\end{equation}
\begin{equation*}
\left[ K_x - K_{xy} \Phi^* \left( \Phi K_y \Phi^*  \right)^{-1}
\Phi K_{yx}  \right]\left[ 1 + \E \big( \mathrm{tr} \left(
(V^* V)^{-1}  \right) \big)  \right]
\end{equation*}
where $V$ is a $N \times L$ random matrix comprising independent CN(0,1) random
variables. We note the asymptotic result,
\begin{equation}
\E \Big( \mathrm{tr} \left(
(V^* V)^{-1}  \right) \Big) \stackrel{N,L \to
\infty}{\longrightarrow} \frac{L}{N-L} .
\end{equation}
The mean--square error (\ref{mse_hatx_Phiy}) is equal to the product of
two terms: the mean-square error which results from performing
estimation with a reduced observation vector and with exact statistics
available, and a penalty term which account for the fact that exact
statistics are not available. The first term typically decreases with
increasing dimensionality parameter, $L$, which the second term
increases with $L$.

Instead of performing the estimation using one value of the
dimensionality-reducing matrix, $\Phi$, one can average the estimator
(\ref{hatx_Phiy}) over the unitary ensemble:
\begin{eqnarray}
\hat{x} (y) & = & \E_\Phi \big( \hat{x}(\Phi
y) \big) \nonumber \\
& = & X Y^* \E_\Phi \Big( \Phi^* (\Phi Y Y^* \Phi^* )^{-1}
\Phi  \Big) y \nonumber \\
& = & X Y^* \cdot \mathrm{invcov}_L (Y Y^* ) \cdot y .
\label{hatx_ensemble}
\end{eqnarray}
Jensen's inequality implies that the ensemble-averaged estimator
(\ref{hatx_ensemble}) has better performance than the estimator
(\ref{hatx_Phiy}) that
is based on a single realization of $\Phi$,
\begin{equation*}
\E \Big(  \big[ \E_\Phi (\hat{x} (\Phi y)) - x   \big] \cdot \big[ \E_\Phi (\hat{x} (\Phi y))-x \big]^*  \Big) \leq 
\end{equation*}
\begin{equation}
\E_\Phi \Big( \E \big( \left[ \hat{x} (\Phi y) -
x   \right] \left[ \hat{x} (\Phi y) - x   \right]^*  \big) \Big).
\label{jensen_mse}
\end{equation} 

\subsection{Supervised learning: Design of a quadratic classifier
from training data}

The problem is to design a quadratic classifier from labeled training
data. Given an observation of a $M \times 1$ zero-mean
complex Gaussian random vector, the classifier has to choose one of
two hypotheses. Under hypothesis $H_j$, $j=0,1$, the observation is
distributed as $\mathrm{CN}(0,K_j)$, $j=0,1$. If the two covariance
matrices were known the optimum classifier is a ``likelihood ratio test" 
\cite{Duda_Hart},
\begin{equation}
- x^* \left( K_1^{-1} - K_0^{-1} \right) x
  \begin{array}{c} H_1 \\ > \\ < \\ H_0 \end{array} \gamma ,
\label{lrt}
\end{equation}
where $\gamma$ is a threshold. Instead the covariances have to be
estimated from two $M \times N$ matrices of labeled training data,
$X_j$,  $j=0,1$, each of which comprises $N<M$ independent
observations of the random vector under their respective hypotheses.

We introduce an ensemble of $L \times M$ random unitary matrices,
$\Phi$, where $L \leq N$. For a given $\Phi$ we reduce the dimension
of both sets of training data and then estimate the reduced covariance
matrices,
\begin{equation}
K_j = \frac{1}{N} \Phi X_j X_j^* \Phi^* , \: j=0,1 .
\label{cov_training}
\end{equation}
For any $\Phi$ we could implement a likelihood ratio test based on the
estimated reduced covariances (\ref{cov_training}) and the reduced
observation, $\Phi x$. Alternatively we could base the hypothesis test
on the expectation of the log--likelihood ratio with respect to the
unitary ensemble,
\begin{equation*}
- x^* \E_\Phi \Big( \Phi^* \big( (\Phi X_1 X_1^* \Phi^* )^{-1} -
  (\Phi X_0 X_0^* \Phi^* )^{-1} \big) \Phi  \Big)  x = 
\end{equation*}
\begin{equation}
- x^* \left( \mathrm{invcov}_L(X_1 X_1^*) - \mathrm{invcov}_L(X_0
  X_0^*) \right) x \begin{array}{c} H_1 \\ > \\ < \\ H_0 \end{array} \gamma.
\label{lrt_icov}
\end{equation}
This classifer is of the ``naive Bayes'' type \cite{naive_bayes}, in
which statistical 
dependencies (in this case the individual likelihood ratios are not
statistically dependent) are ignored in order to simplify the
construction of the classifier. 

\subsection{Capon MVDR spectral estimator}

The Capon MVDR (minimum variance distortionless response) spectral
estimator estimates power as a function of angle-of-arrival given $N$
independent realizations of a $M$-dimensional measurement vector from
an array of sensors \cite{mvdr}. Let $X$ be the $M \times N$ vector of
measurements, $M < N$, and let the ``steering vector'', $a$, be the
$M$ dimensional unit vector which describes the wavefront at the
array. The conventional power estimate, as a function of the steering
vector, is
\begin{equation}
P_{\mathrm{conv}} = a^* K a ,
\label{p_conv}
\end{equation}
where $K$ is the sample covariance matrix. The Capon MVDR power
estimate is
\begin{equation}
P_{\mathrm{Capon}} = \frac{1}{a^* K^{-1} a} .
\label{p_capon}
\end{equation}
A justification for the Capon estimator is the following: one
considers the estimated covariance matrix 
to be the sum of two terms, the first corresponding to power arriving
from the direction that is specified by the steering vector, and the
second corresponding to power arriving from all other directions,
\begin{equation}
K = P \cdot a a^* + K_{\mathrm{other}} .
\label{K_decomp}
\end{equation}
It can be shown that the Capon power estimate (\ref{p_capon}) is
equal to the largest value of power $P$ such that, in the
decomposition (\ref{K_decomp}), $K_{\mathrm{other}}$ is nonnegative
definite \cite{marzetta_capon}. In other words the decomposition
(\ref{K_decomp}) is nonunique, and the Capon power estimate is an
upper bound on the possible value that the power can take.

We deal with the singularity of the covariance matrix by introducing
an ensemble of $L \times M$ unitary matrices, $\Phi$. Since we are
looking for power that arrives from a particular direction we
constrain the unitary matrices to preserve the energy of the steering
vector, i.e., $a^* \Phi^* \Phi a = a^* a = 1$. This is readily done
through a Householder unitary matrix, 
$Q$, such that
\begin{equation}
Q = \left[ \begin{array}{c} a^* \\ A_\perp \end{array}  \right] ,
\label{householder}
\end{equation}
where $A_\perp$ is a $M-1 \times M$ unitary matrix whose rows are
orthogonal to $a$. We represent the ensemble $\Phi$ as follows:
\begin{equation}
\Phi = \left[ \begin{array}{c} a^* \\ \Theta A_\perp \end{array}  \right],
\label{phi_constrained}
\end{equation}
where $\Theta$ is a $L-1 \times M-1$ isotropically random unitary
matrix. We now use the constrained unitary matrix $\Phi$ to reduce the
dimensionality of the sample covariance matrix and the steering
vector, we compute the Capon power estimate from the reduced
quantities, and finally we average 
the power with respect to the unitary ensemble \cite{marzetta_simon}:
\begin{eqnarray}
\hat{P} & = & \E_\Phi \Bigg[ \frac{1}{a^* \Phi^* \left( \Phi
K \Phi^*  \right)^{-1} \Phi a} \Bigg] \\
& = & a^* K a - a^* K A_\perp^* \,\mathrm{invcov}_{L-1} ( A_\perp
K A_\perp^* ) A_\perp K a \nonumber
\label{p_capon_reduced}
\end{eqnarray}
where
\begin{equation}
\mathrm{invcov}_{L-1} ( A_\perp K A_\perp^* ) = \E_\Theta
\Big(  \Theta^* \left(  \Theta A_\perp K A_\perp^* \Theta^*
\right)^{-1} \Theta  \Big).
\label{icov_theta}
\end{equation}

\subsection{Distantly related research}

Our approach to handling covariance singularity is based on an
ensemble of dimensionality--reducing random unitary matrices. Here we
mention some other lines of research which also involve random
dimensionality reduction.

\subsubsection{Johnson--Lindenstauss Lemma}

In qualitative terms, the Johnson--Lindenstrauss Lemma
\cite{Johnson_Lindenstrauss} has the following implication: the angle
between two vectors of high 
dimension tends to be preserved accurately when the vectors are
shortened through multiplication by a random unitary
dimensionality--reducing matrix.

\subsubsection{Compressive Sampling or Sensing}

Compressive sampling or sensing permits the recovery of a
sparsely-sampled data vector (for example, obtained by multiplying the
original vector by a random dimensionality--reducing matrix), provided
the original data vector can be linearly transformed to a domain in
which it has sparse support \cite{Compressive_sampling}. Compressive
sampling utilizes only one dimensionality--reducing matrix. In contrast
our approach to handling covariance singularity utilizes an
\emph{ensemble} of random dimensionality--reducing matrices.

\section{Derivation of Some Basic Properties of $\mathrm{invcov}$}\label{derivation}

In this Section we state and prove two basic and fundamental properties of
$\mathrm{invcov}_L(K)$. We perform the eigenvector and eigenvalue
decomposition,
\begin{equation}
K=UDU^{*},
\end{equation}
where $D$ is the $M \times M$ diagonal matrix, whose diagonals are the
eigenvalues, ordered from largest to smallest, and $U$ is the $M
\times M$ unitary matrix of eigenvectors.

\subsection{Eigenvectors of sample covariance are preserved}

We substitute the eigenvalue decomposition (\ref{K_eigen}) into the expression
(\ref{ICOV}) for $\mathrm{invcov}_L(K)$ to obtain the following:
\begin{eqnarray}
\mathrm{invcov}_L(K)
& = & \E_\Phi \Big( \Phi^* \left( \Phi K
\Phi^* \right)^{-1} \Phi \Big) \nonumber \\
& = & \E_\Phi \Big( U (\Phi U)^* \left( \Phi U D U^*
\Phi^* \right)^{-1} (\Phi U) U^* \Big) \nonumber \\
& = & U \E_\Phi \Big( \Phi^* \left( \Phi D
\Phi^* \right)^{-1} \Phi \Big) U^* \nonumber \\
& = & U \mathrm{invcov}_L(D) U^* 
\label{ICOV_2}
\end{eqnarray}
where we have used the fundamental definition of the isotropic
distribution (\ref{isorandom}), i.e. that the product $\Phi U$ has the
same distribution as $\Phi$. We intend to show that
$\mathrm{invcov}_L(D)$ is itself diagonal. We utilize the fact that a
matrix $A$ is diagonal if and only if, for all diagonal unitary
matrices, $\Omega$, $\Omega A \Omega^* = A$. Let $\Omega$ be a
diagonal unitary matrix, we have
\begin{eqnarray}
& & \Omega\,\,\mathrm{invcov}_L(D) \,\,\Omega^* = \nonumber \\
& = & \E_\Phi \Big( (\Phi \Omega^*)^* \big( (\Phi \Omega^*)
\Omega D \Omega^* (\Phi \Omega^*)^* \big)^{-1} (\Phi \Omega^*)
\Big) \nonumber \\
& = & \E_\Phi \Big( \Phi^* \big( \Phi
 D \Phi^* \big)^{-1} \Phi
\Big) \nonumber \\
& = &  \mathrm{invcov}_L(D)
\label{proof_eig}
\end{eqnarray}
where we used the fact that $\Phi \Omega^*$ has the same distribution
as $\Phi$, and that $\Omega D \Omega^* = D$. Therefore we have
established that the final expression in (\ref{ICOV_2}) is the
eigenvector/eigenvalue decomposition of $\mathrm{invcov}_L(K)$, for
which the
eigenvector matrix is $U$ and the diagonal matrix of eigenvalues is
$\mathrm{invcov}_L(D)$. Hence, we need only consider applying $\mathrm{invcov}$ to diagonal matrices.

\subsection{The zero-eigenvalues of the sample covariance are
converted to equal positive values}

When the rank of the covariance matrix is equal to $N < M$, the
eigenvalue matrix of $K$ has the form
\begin{equation}
D = \left[ \begin{array}{cc} D_N & 0 \\ 0 & 0_{M-N} \end{array}   \right].
\label{D_N}
\end{equation}
We want to establish that the last $M-N$ eigenvalues of
$\mathrm{invcov}_L(K)$ are equal. To that end we introduce a unitary
matrix, $\Xi$,
\begin{equation}
\Xi = \left[ \begin{array}{cc} I_N & 0 \\ 0 & P_{M-N} \end{array}
\right],
\label{permutation}
\end{equation}
 where $P_{M-N}$ is an arbitrary $M-N \times M-N$ permutation
 matrix. We now pre-- and post--multiply $\mathrm{invcov}_L(D)$ by $\Xi$
 and $\Xi^*$ respectively: it will be shown that this does not change
 the diagonal matrix, so consequently the last $M-N$ eigenvalues are
 equal. We have
\begin{eqnarray}
\Xi \,\,\mathrm{invcov}_L(D)\,\, \Xi^* & = & \Xi \,\,\mathrm{E}_\Phi \Big( \Phi^* (\Phi D \Phi^*)^{-1} \Phi
\Big) \,\, \Xi^* \nonumber \\
& = & \E_\Phi \Big( \Phi^* (\Phi \Xi D \Xi^* \Phi^*)^{-1}
\Phi \Big) \nonumber \\
& = & \E_\Phi \Big( \Phi^* (\Phi D \Phi^*)^{-1}
\Phi \Big) \nonumber \\
& = & \mathrm{invcov}_L(D),
\label{proof_eig_2}
\end{eqnarray}
where we used the fact that $\Phi \Xi^*$ has the same distribution as
$\Phi$, and that $\Xi D \Xi^* = D$.

\section{Functional Equation}
\vspace{0.4cm}
\par In this Section we will prove a functional equation for the inverse covariance estimate $\mathrm{invcov}_{L}(K)$.

\par Let $K$ be an $M\times M$ sample covariance matrix $K$ of rank $N$. Since $K$ is positive definite there exists $U$ an $M\times M$ unitary and $D$ an $M\times M$ diagonal matrix of rank $N$ such that $K=UDU^{*}$. Fix $L\leq N$. We would like to compute, 
\begin{equation}
\mathrm{invcov}_{L}(K)  = \E(\Phi^{*}(\Phi K\Phi^{*})^{-1}\Phi)
\end{equation}
where $\Phi$ is an $L\times M$ unitary matrix and the average is taken with respect to the isotropic measure. Let $Z$ be an $L\times M$ Gaussian random matrix with complex, independent and identically distributed entries with zero mean and variance 1. It is a well known result in random matrix theory (see \cite{Ver}) that we can decompose $Z=C\Phi$ where $C$  is an $L\times L$ positive definite and invertible matrix (with probability one). Hence, $Z^{*}(ZKZ^{*})^{-1}Z=\Phi^{*}(\Phi K\Phi^{*})^{-1}\Phi$. Therefore,
\begin{equation}
\mathrm{invcov}_{L}(K)=\E\Big(Z^{*}(ZKZ^{*})^{-1}Z\Big).
\end{equation}
Moreover, as shown in the previous Section  
\begin{equation}
\mathrm{invcov}_{L}(K)=U\E\Big(Z^{*}(ZDZ^{*})^{-1}Z\Big)U^{*}.
\end{equation}
\noindent Therefore it is enough to compute $\E(Z^{*}(ZDZ^{*})^{-1}Z)$. Decompose $Z$ as $Z=[X,Y]$ where $X$ is $L\times N$ and $Y$ is $L\times (M-N)$. Now performing the block matrix multiplications and taking the expectation we obtain that $\mathrm{invcov}_{L}(D)$ is equal to 
\begin{equation}\label{eq_fund}
\Bigg[\begin{array}{ll}
\E(X^{*}(XD_{N}X^{*})^{-1}X) & 0 \\ 
0 & \E(Y^{*}(XD_{N}X^{*})^{-1}Y)\end{array}\Bigg]
\end{equation}
\noindent where $D=\mathrm{diag}(D_{N},0_{M-N})$ and $D_{N}$ an $N\times N$ diagonal matrix of full rank.

\vspace{0.3cm}
\noindent Let us first focus on the $N\times N$ matrix $\E(X^{*}(XD_{N}X^{*})^{-1}X)$, denote this matrix by  
\begin{equation}
\Lambda_{L}(D_{N}):=\E(X^{*}(XD_{N}X^{*})^{-1}X).
\end{equation}
Let $W$ be the matrix $W=XD_{N}^{1/2}$. Then
\begin{eqnarray}
\Lambda_{L}(D_{N}) & = & E(X^{*}(XD_{N}X^{*})^{-1}X)\\
& = & D_{N}^{-1/2}\E(W^{*}(WW^{*})^{-1}W)D_{N}^{-1/2}\nonumber
\end{eqnarray}
\noindent and
\begin{eqnarray*}
\E(W^{*}(WW^{*})^{-1}W) & = & \lim_{\gamma\to 0}{\,\E\Big[W^{*}(\gamma I_{L}+WW^{*})^{-1}W\Big]}\\
& = & I_{N}-\lim_{\gamma\to 0}{\E\Big[\big(I_{N}+\frac{1}{\gamma}W^{*}W\big)^{-1}\Big]}.
\end{eqnarray*}
\noindent Therefore it is enough to compute 
$$
\lim_{\gamma\to 0}{\,\,\E\Big[\Big(I_{N}+\frac{1}{\gamma}W^{*}W\Big)^{-1}\Big]}
$$
\noindent which is equal to 
$$
\lim_{\gamma\to 0}{\,\,D_{N}^{-1/2}\,\E\Big[\Big(D_{N}^{-1}+\frac{1}{\gamma}X^{*}X\Big)^{-1}\Big]D_{N}^{-1/2}}.
$$
\noindent Let us decompose $X^{*}X=\Omega D_{x}\Omega^{*}$ where $\Omega$ is an $N\times N$ unitary matrix and $D_{x}$ is an $N\times N$ diagonal matrix of rank $L$. Then $D_{x}=\mathrm{diag}(D_{0},0_{N-L})$. It is a straightforward calculation to see that 
$$
\,\,D_{N}^{-1/2}\,\E\Big[\Big(D_{N}^{-1}+\frac{1}{\gamma}X^{*}X\Big)^{-1}\Big]D_{N}^{-1/2}
$$
\noindent it is equal to
$$
D_{N}^{-1/2}\,\E\Big[\Omega\Big(\Omega^{*}D_{N}^{-1}\Omega+\frac{1}{\gamma}D_{x}\Big)^{-1}\Omega^{*}\Big]D_{N}^{-1/2}.
$$
\noindent Doing the block matrix decomposition 
$$
\Omega^{*}D_{N}^{-1}\Omega=\Bigg[
\begin{array}{ll}
A_{11} & A_{12} \\ 
A_{21} & A_{22}\end{array}\Bigg]
$$
\noindent where $A_{11}$ is $L\times L$ and $A_{22}$ is $(N-L)\times (N-L)$ we see that 
\begin{eqnarray*}
\Big(\Omega^{*}D_{N}^{-1}\Omega+\frac{1}{\gamma}D_{x}\Big)^{-1} & = & \Bigg[
\begin{array}{ll}
A_{11}+\frac{1}{\gamma}D_{0} & A_{12} \\ 
A_{21} & A_{22}\end{array}\Bigg]^{-1}\\
& = & \Bigg[
\begin{array}{ll}
X & Y \\ 
Z & W\end{array}\Bigg]
\end{eqnarray*}
\noindent where $X=\big(A_{11}+\frac{1}{\gamma}D_{0}-A_{12}A_{22}^{-1}A_{21}\big)^{-1}$, $Y=-XA_{12}A_{22}^{-1}$, $Z=-A_{22}^{-1}A_{21}X$ and $W=A_{22}^{-1}+A_{22}^{-1}A_{21}XA_{12}A_{22}^{-1}$. Since $\lim_{\gamma\to 0}{X}=0$ we see that 
$$\lim_{\gamma\to 0}{\Big(\Omega^{*}D_{N}^{-1}\Omega+\frac{1}{\gamma}D_{x}\Big)^{-1}}=\Bigg[
\begin{array}{ll}
0 & 0 \\ 
0 & A_{22}^{-1}\end{array}\Bigg].$$
\noindent Putting all the pieces together we obtain that
\begin{equation}\label{EQ1}
\Lambda_{L}(D_{N})=D_{N}^{-1}-D_{N}^{-1}\cdot\mathbb{E}\Bigg(\Omega\Bigg[\begin{array}{ll}
0 & 0 \\
0 & A_{22}^{-1}\end{array}\Bigg]\Omega^{*}\Bigg)\cdot D_{N}^{-1}
\end{equation}
\noindent where $\Omega$ is an isotropically distributed $N\times N$ unitary matrix and
$$\Omega^{*}D_{N}^{-1}\Omega=\Bigg[\begin{array}{ll}
A_{11} & A_{12} \\
A_{21} & A_{22}\end{array}\Bigg].$$

\vspace{0.3cm}

\noindent Let us decompose the unitary matrix $\Omega$ as $\Omega=[\Omega_{1}\,\,\, \Omega_{2}]$, where $\Omega_{1}$ is $N\times L$ and $\Omega_{2}$ is $N\times (N-L)$ matrix. Then $\Omega_{1}^{*}\Omega_{1}=I_{L}$ and $\Omega_{2}^{*}\Omega_{2}=I_{N-L}$ isotropically distributed unitaries. It is an easy calculation to see that
\begin{eqnarray}\label{EQ2}
\mathbb{E}\Bigg(\Omega\Bigg[\begin{array}{ll}
0 & 0 \\
0 & A_{22}^{-1}\end{array}\Bigg]\Omega^{*}\Bigg) & = & \mathbb{E}\Big( \Omega_{2}\big(\Omega_{2}^{*}D_{N}^{-1}\Omega_{2}\big)^{-1}\Omega_{2}^{*} \Big) \nonumber \\ 
& = & \Lambda_{N-L}(D_N^{-1}).
\end{eqnarray}

\vspace{0.3cm}
\noindent Therefore, using equation (\ref{EQ1}) and equation (\ref{EQ2}) we found the following functional equation
\vspace{0.2cm}
\begin{equation}\label{funct_eq}
D_{N}\Lambda_{L}(D_{N})+D_{N}^{-1}\Lambda_{N-L}(D_{N}^{-1})=I_{N}.
\end{equation}

\vspace{0.5cm}
\begin{remark}
Here we list a few results on $\Lambda_{L}(D_N)$.
\begin{enumerate}
	\item If $N=L$ then $D_{N}\Lambda_{N}(D_{N})=I_N$ and therefore $\Lambda_{N}(D_{N})=D_{N}^{-1}$.
	 
	\item It is not difficult to see, and well known result on random matrices, see \cite{Ver}, that $\Lambda_{L}(I_N)=\frac{L}{N}I_N$. Therefore, 
	\begin{equation}\label{eq_d0}
	\Lambda_{L}(\alpha I_N)=\frac{L}{\alpha N} I_N.  
	\end{equation}
	Hence,
	$$
	\alpha I_N\Lambda_L(\alpha I_N)+\alpha^{-1}I_{N}\Lambda_{N-L}(\alpha^{-1} I_N)=I_N
	$$
	which agrees with equation (\ref{funct_eq}).

	\item $\mathrm{Tr}(D_{N}\Lambda_{L}(D_{N}))=\mathrm{Tr}(\E(D_{N}X^{*}(XD_{N}X^{*})^{-1}X))=L$ where in the last equality we used the trace property.

\end{enumerate}
\end{remark}

\vspace{0.3cm}
\par As we saw in Equation (\ref{eq_fund}) the other important term in $\mathrm{invcov}_{L}(D)$ is 
$$
\E(Y^{*}(XD_{N}X^{*})^{-1}Y).
$$
Let us define $\mu>0$ as 
\begin{equation}\label{eqmu}
\mu:=\mathrm{Tr}\Big[\E((XD_{N}X^{*})^{-1})\Big].
\end{equation}
Since $X$ and $Y$ are Gaussian independent random matrices it is clear that 
\begin{eqnarray*}
\E(Y^{*}(XD_{N}X^{*})^{-1}Y) & = & \mathrm{Tr}\Big[\E((XD_{N}X^{*})^{-1})\Big]I_{M-N}\\
& = & \mu I_{M-N},
\end{eqnarray*}
where $I_{M-N}$ is the identity matrix of dimension $M-N$. Putting all the pieces together we see that the estimate $\mathrm{invcov}_{L}(D)$ is equal to 
\begin{equation}
\mathrm{invcov}_{L}(D)=\mathrm{diag}(\Lambda_{L}(D_{N}),\mu I_{M-N}).
\end{equation}

\section{fcov exact formula}\label{main_res}

\par In this Section we will prove an exact and close form expression for the entries of $\mathrm{invcov}_{L}(D)$. We will treat separately the entries of $\Lambda_{L}(D_{N})$ and the constant term $\mu$. As a matter of fact the analysis developed in this Section will allow us to obtain close form expressions for more general averages. 

\par Recall that we say a matrix $A$ is said to be normal if it commutes with its conjugate transpose $AA^{*}=A^{*}A$. Given a normal matrix $A$ and $f:\C\to\C$ a continuous function we can always define $f(A)$ using functional calculus. Being more precise we know by the spectral Theorem that exist $U$ unitary and $D=\mathrm{diag}(d_1,\ldots,d_N)$ such that 
$$
A=UDU^{*}.
$$ 
We then define 
$$
f(A)=UD_{f}U^{*}
$$
where $D_{f}=\mathrm{diag}(f(d_1),\ldots,f(d_N))$. In particular, let $D$ be as before and let $f$ be a continuous function, we will obtain an exact expression for
\begin{equation}
\mathrm{fcov}_{L}(D):=\E\Big(\Phi^{*}f(\Phi D\Phi^{*})\Phi\Big)
\end{equation}
where $\Phi$ is an $L\times N$ unitary isotropically random. Note that our covariance estimate $\mathrm{invcov}_{L}(D)$ is a particular case of the last expression when $f(x)=x^{-1}$.

\vspace{0.3cm}
\noindent Let $\Omega_{L,N}=\{\Phi\in\C^{N\times L}\,\,:\,\, \Phi^{*}\Phi=I_{L}\}$ be the Stiefel manifold with the isotropic measure $d\phi$. By equation (18) in \cite{Fyo} we know that 
\begin{equation}\label{intschur}
\int_{\Omega_{L,N}}{s_{\lambda}(\Phi^{*}D_N\Phi)d\phi}=\frac{s_{\lambda}(D_N)s_{\lambda}(I_{L})}{s_{\lambda}(I_{N})} 
\end{equation}

\noindent where $s_{\lambda}$ is the Schur polynomial associated with the partition $\lambda$. The latter are explicitly defined for any $N\times N$ matrix $A$ in terms of the eigenvalues $a_{1},\ldots,a_{N}$ as 
\begin{equation}
s_{\lambda}(A)=s_{\lambda}(a_{1},\ldots,a_{N})=\frac{\det(a_{i}^{N+\lambda_{j}-j})_{i,j=1}^{N}}{\det(a_{i}^{N-j})_{i,j=1}^{N}},
\end{equation}
\noindent with $\lambda$ being a partition, i.e. a non--increasing sequence of non--negative integers $\lambda_{j}$. For an introduction to the theory of symmetric functions and properties of the Schur polynomials see \cite{Mac} and \cite{Muir}.

\vspace{0.3cm}
\noindent Denote by $(n-k,1^{k})$ the partition $(n-k,1,1,\ldots,1)$ with $k$ ones. One of the properties of the Schur polynomials is that 
\begin{equation}\label{traceschur}
\mathrm{Tr}(A^{n})=\sum_{k=0}^{N-1}{(-1)^{k}s_{(n-k,1^{k})}(A)} 
\end{equation}

\noindent Using equation (\ref{intschur}) and (\ref{traceschur}) we see that 
$$
\int_{\Omega_{L,N}}{\mathrm{Tr}\Big((\Phi^{*}D_N\Phi)^{n}\Big)\,d\phi}
$$
is equal to 
\begin{equation}\label{x^n}
\sum_{k=0}^{L-1}{(-1)^{k}\,\,\frac{s_{(n-k,1^{k})}(D_N)s_{(n-k,1^{k})}(I_{L})}{s_{(n-k,1^{k})}(I_{N})}}. 
\end{equation}

\noindent The constant $s_{(n-k,1^{k})}(I_{p})=\frac{(n+p-k-1)!}{k!(p-k-1)!(n-k-1)!n}$ see \cite{Mac}. Therefore, 
\begin{equation}\label{const}
\frac{s_{(n-k,1^{k})}(I_{L})}{s_{(n-k,1^{k})}(I_{N})}=\frac{(n+L-(k+1))!}{(n+N-(k+1))!}\cdot\frac{(N-(k+1))!}{(L-(k+1))!}.
\end{equation}

\vspace{0.3cm}

\noindent For each $p\geq 0$ consider the operator $I^{(p)}$ defined in $x^{n}$ by $I^{(p)}(x^{n})=\frac{x^{n+p}}{(n+1)\ldots(n+p)}$. This extends linearly and continuously to a well defined linear operator $I^{(p)}:C[0,r]\to C[0,r]$ where $C[0,r]$ are the continuous functions in the interval $[0,r]$. Now we are ready to state the main Theorem of this Section.

\vspace{0.3cm}
 
\begin{theorem}\label{main}
Let $D_N$ be an $N\times N$ diagonal matrix of rank $N$. For any continuous (complex or real valued) function $f\in C[d_{\mathrm{min}},d_{\mathrm{max}}]$ 
\begin{equation}\label{f_eq}
\int_{\Omega_{L,N}}{\mathrm{Tr}\Big(f(\Phi^{*}D_N\Phi)\Big)\,d\phi}
\end{equation}
is equal to 
$$
\sum_{k=0}^{L-1}{\frac{(N-(k+1))!}{(L-(k+1))!}\cdot\frac{\det(G_{k})}{\det(\Delta(D_N))}} 
$$
where $\Delta(D_N)$ is the Vandermonde matrix associated to $D_N$ and $G_{k}$ is the matrix defined by replacing the $(k+1)$ row of the Vandermonde matrix $\Delta(D_N)$, $\{d_{i}^{N-(k+1)}\}_{i=1}^{n}$, by the row 
$$
\Big\{I^{(N-L)}(x^{(L-(k+1))}f(x))|_{x=d_i}\Big\}_{i=1}^{N}.
$$
\end{theorem}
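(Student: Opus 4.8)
The plan is to prove the formula first for the monomials $f(x)=x^{n}$, where both sides are unambiguous polynomial expressions, and then to pass to a general continuous $f$ by linearity and a density argument. For the monomial case the starting point is the expansion (\ref{x^n}) already established above, namely that $\int_{\Omega_{L,N}}\mathrm{Tr}((\Phi^{*}D_{N}\Phi)^{n})\,d\phi$ equals $\sum_{k=0}^{L-1}(-1)^{k}\,s_{(n-k,1^{k})}(D_N)\,s_{(n-k,1^{k})}(I_{L})/s_{(n-k,1^{k})}(I_{N})$. The task is therefore purely algebraic: to rewrite each summand in terms of a modified Vandermonde determinant and the integration operator $I^{(N-L)}$.

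The central step is to recognize $s_{(n-k,1^{k})}(D_N)$ as a ratio of determinants of Vandermonde type. Writing out the bialternant formula $s_{\lambda}(D_N)=\det(d_{i}^{N+\lambda_{j}-j})/\det(d_{i}^{N-j})$ for the hook partition $\lambda=(n-k,1^{k})$, one checks that the multiset of exponents of the numerator is exactly $\{0,1,\dots,N-1\}$ with the entry $N-(k+1)$ deleted and the entry $n+N-(k+1)$ adjoined. Equivalently, the numerator is the Vandermonde matrix $\Delta(D_N)$ with the exponent of its $(k+1)$-st row raised from $N-(k+1)$ to $n+N-(k+1)$, after the reinserted row is sorted back into position; this sorting moves one row past $k$ others and contributes a sign $(-1)^{k}$. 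Denoting by $\tilde G_{k}$ the matrix $\Delta(D_N)$ with its $(k+1)$-st row replaced by $\{d_{i}^{\,n+N-(k+1)}\}_{i=1}^{N}$, we obtain $s_{(n-k,1^{k})}(D_N)=(-1)^{k}\,\det(\tilde G_{k})/\det(\Delta(D_N))$.

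It remains to absorb the factorial ratio into the operator $I^{(N-L)}$. Applying the definition $I^{(p)}(x^{m})=x^{m+p}/\big((m+1)\cdots(m+p)\big)$ with $m=n+L-(k+1)$ and $p=N-L$ gives $I^{(N-L)}\big(x^{L-(k+1)}\,x^{n}\big)=\frac{(n+L-(k+1))!}{(n+N-(k+1))!}\,x^{\,n+N-(k+1)}$, so that the row $\{I^{(N-L)}(x^{L-(k+1)}f(x))|_{x=d_i}\}$ defining $G_{k}$ is exactly $\frac{(n+L-(k+1))!}{(n+N-(k+1))!}$ times the corresponding row of $\tilde G_{k}$; pulling this scalar out of the determinant yields $\det(G_{k})=\frac{(n+L-(k+1))!}{(n+N-(k+1))!}\det(\tilde G_{k})$. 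Substituting the determinant expression for $s_{(n-k,1^{k})}(D_N)$ together with the factorial quotient (\ref{const}) for $s_{(n-k,1^{k})}(I_{L})/s_{(n-k,1^{k})}(I_{N})$ into (\ref{x^n}), the two signs $(-1)^{k}$ cancel and the factorial $\frac{(n+L-(k+1))!}{(n+N-(k+1))!}$ combines with $\det(\tilde G_{k})$ to produce $\det(G_{k})$, leaving exactly $\sum_{k=0}^{L-1}\frac{(N-(k+1))!}{(L-(k+1))!}\,\det(G_{k})/\det(\Delta(D_N))$. This proves the theorem for $f(x)=x^{n}$.

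Finally I would extend to arbitrary continuous $f$. Both sides are linear in $f$ --- the left side because $\mathrm{Tr}(f(A))$ is linear in $f$ through the functional calculus, and the right side because $I^{(N-L)}$ is linear and the determinant is linear in the single replaced row --- so the identity holds for all polynomials. To reach $C[d_{\mathrm{min}},d_{\mathrm{max}}]$ I invoke the Weierstrass approximation theorem together with continuity of both functionals in the sup norm: the left side depends only on the spectrum of $\Phi^{*}D_{N}\Phi$, which by the Poincar\'e separation (Cauchy interlacing) inequality lies in $[d_{\mathrm{min}},d_{\mathrm{max}}]$, while the right side is continuous because $I^{(N-L)}$ is a bounded $(N-L)$-fold integration operator and determinants depend continuously on their entries. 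I expect the main obstacle to be the exponent-and-sign bookkeeping of the second paragraph: correctly identifying which Vandermonde row is modified and verifying that the reordering sign $(-1)^{k}$ precisely cancels the $(-1)^{k}$ from the trace expansion (\ref{traceschur}); once this is pinned down the remaining manipulations are routine.
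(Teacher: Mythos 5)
Your proof is correct and takes essentially the same route as the paper's: reduce to monomials $f(x)=x^{n}$ by linearity and density, rewrite the hook Schur polynomial $s_{(n-k,1^{k})}(D_N)$ via the bialternant formula as a Vandermonde determinant with one modified row (the $(-1)^{k}$ row-reordering sign cancelling the sign in (\ref{traceschur})), and absorb the factorial ratio (\ref{const}) into that row through the identity $I^{(N-L)}(x^{n+L-(k+1)})=\frac{(n+L-(k+1))!}{(n+N-(k+1))!}x^{n+N-(k+1)}$. Your bookkeeping is, if anything, slightly more careful than the paper's (you pull the scalar out of the single replaced row, and you justify the density step via interlacing and boundedness of $I^{(N-L)}$), but the argument is the same.
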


\begin{proof}
By linearity and continuity (polynomials are dense in the set of continuous functions) it is enough to prove (\ref{f_eq}) in the case $f(x)=x^{n}$. By (\ref{x^n}) and (\ref{const}) we know that 
$$
\int_{\Omega_{L,N}}{\mathrm{Tr}\Big((\Phi^{*}D_N\Phi)^{n}\Big)\,d\phi}
$$
is equal to 
\begin{equation}
\sum_{k=0}^{L-1}{(-1)^{k}\,\,c_{k}^{(N,L)}\cdot s_{(n-k,1^{k})}(D_N)}
\end{equation}
where 
$$
c_{k}^{(N,L)}:=\frac{(N-(k+1))!}{(L-(k+1))!}\cdot\frac{(n+L-(k+1))!}{(n+N-(k+1))!}.
$$ 
\noindent By definition of the Schur polynomials (see \cite{Mac}) 
$$
s_{(n-k,1^{k})}(D_N)=\frac{\det(S_{k})}{\det(\Delta(D_N))}
$$ 
where the $i^{th}$--column of the matrix $S_{k}$ is 
$$\begin{pmatrix}
d_{i}^{N-1+n-k} \\ 
d_{i}^{N-2+1} \\
d_{i}^{N-3+1} \\
\vdots \\ 
d_{i}^{N-(k+1)+1} \\ 
d_{i}^{N-(k+2)}\\
\vdots \\
d_{i}^{N-(N-1)}\\
1\\
\end{pmatrix}.$$

\vspace{0.4cm}
\noindent Doing $k$ row transpositions on the rows of the matrix $S_{k}$ we obtain a new matrix $\tilde{S}_{k}$ where the $i^{th}$--column of this new matrix is 

$$
\begin{pmatrix}
d_{i}^{N-1} \\ 
d_{i}^{N-2} \\
d_{i}^{N-3} \\
\vdots \\ 
d_{i}^{N-k}\\
d_{i}^{N+n-(k+1)} \\ 
d_{i}^{N-(k+2)}\\
\vdots \\
d_{i}^{N-(N-1)}\\
1\\
\end{pmatrix}.
$$
Note that this matrix $\tilde{S}_{k}$ is identical to $\Delta(D_N)$ except of the $(k+1)$ row which was replaced by the row $\{d_{i}^{N+n-(k+1)}\}_{i=1}^{N}$ instead of $\{d_{i}^{N-(k+1)}\}_{i=1}^{N}$ as in $\Delta(D_{N})$. Therefore,
$$
\int_{\Omega_{L,N}}{\mathrm{Tr}\Big((\Phi^{*}D_N\Phi)^{n}\Big)\,d\phi}
$$
is equal to 
\begin{equation}
\sum_{k=0}^{L-1}{\,c_{k}^{(N,L)}\cdot \frac{\mathrm{det}(\tilde{S}_k)}{\mathrm{det}(\Delta(D_N))}}.
\end{equation}

\noindent Now and using the fact that 
$$
I^{(N-L)}(x^{n+L-(k+1)})\r{x=d_{i}}=\frac{(n+L-(k+1))!}{(n+N-(k+1))!} d_{i}^{N+n-(k+1)}
$$ 
we obtain that 
$$
G_{k}=\frac{(n+L-(k+1))!}{(n+N-(k+1))!}\cdot \tilde{S}_k.
$$ 
Putting all the pieces together we obtain that
$$
\int_{\Omega_{L,N}}{\mathrm{Tr}((\Phi^{*}D_N\Phi)^{n})\,d\phi}
$$
is equal to
$$
\sum_{k=0}^{L-1}{\frac{(N-(k+1))!}{(L-(k+1))!}\cdot\frac{\det(G_{k})}{\det(\Delta(D_N))}}. 
$$
\end{proof}

\vspace{0.3cm}
\noindent The next Proposition gives us, in particular, an exact and close form expression for the term $\mu$ in equation (\ref{eqmu}) discussed previously.

\vspace{0.3cm}
\begin{prop}\label{mainprop}
Let $D_N$ be an $N\times N$ diagonal matrix of rank $N$. Let $1\leq L<N$ and $X$ be an $N\times L$ Gaussian random matrix with independent and identically distributed entries with zero mean and variance $1$. Then
\begin{equation}
\int_{\Omega_{L,N}}{\mathrm{Tr}((\Phi^{*}D_N\Phi)^{-1})\,d\phi}=(N-L)\cdot\frac{\det(G)}{\det(\Delta(D_N))}
\end{equation}
and 
\begin{equation}
\mu:=\E\Big[\mathrm{Tr}\big((X^{*}D_N X)^{-1}\big)\Big]=\frac{\det(G)}{\det(\Delta(D_N))}
\end{equation}
where $G$ is the matrix constructed by replacing the $L^{th}$ row of the Vandermonde matrix by the row 
$$\Big(d_{1}^{N-(L+1)}\log(d_1),\ldots,d_{N}^{N-(L+1)}\log(d_{N})\Big).$$
\end{prop}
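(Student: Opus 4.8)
The plan is to establish the two identities separately: the Stiefel--manifold integral comes directly out of Theorem \ref{main} applied to the singular function $f(x)=x^{-1}$, and the Gaussian average $\mu$ is then tied to that integral by a decomposition argument. Since $D_N$ has full rank we have $d_{\mathrm{min}}>0$, so $f(x)=x^{-1}$ is continuous on $[d_{\mathrm{min}},d_{\mathrm{max}}]$ and Theorem \ref{main} is in force. The formula there requires the quantities $I^{(N-L)}(x^{L-(k+1)}f(x))=I^{(N-L)}(x^{L-k-2})$ for $k=0,\dots,L-1$. For $k\le L-2$ the exponent $L-k-2$ is a nonnegative integer, so by the monomial rule the $(k+1)$-st row of $G_k$ becomes a scalar multiple of $\{d_i^{N-(k+2)}\}_{i=1}^{N}$; this is precisely the $(k+2)$-nd row of $\Delta(D_N)$, which $G_k$ still carries, so $\det(G_k)=0$ and every term with $k\le L-2$ drops out, leaving only $k=L-1$.

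The delicate term is $k=L-1$, where one must apply $I^{(N-L)}$ to $x^{-1}$, outside the literal scope of the monomial rule. I would regularize by replacing $f(x)=x^{-1}$ with $f_s(x)=x^{s}$ and letting $s\to-1$. With $p=N-L$ and $\epsilon=s+1$, the monomial rule gives the modified ($L$-th) row entry $I^{(p)}(x^{s})|_{x=d_i}=d_i^{\,s+p}/[(s+1)(s+2)\cdots(s+p)]$, whose expansion in $\epsilon$ is $\frac{1}{\epsilon\,(p-1)!}\,d_i^{\,p-1}+\frac{1}{(p-1)!}\,d_i^{\,p-1}\log d_i+(\text{const})\,d_i^{\,p-1}+O(\epsilon)$. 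Both the divergent $\epsilon^{-1}$ coefficient and the extra constant coefficient are multiples of $\{d_i^{N-(L+1)}\}$, i.e.\ of the $(L+1)$-st Vandermonde row that $G_{L-1}$ already contains, so they are invisible to the determinant; what survives is exactly the row $\{d_i^{N-(L+1)}\log d_i\}$. Hence $\det(G_{L-1})\to \det(G)/(N-L-1)!$, and multiplying by the Theorem \ref{main} prefactor $(N-(k+1))!/(L-(k+1))!=(N-L)!/0!$ at $k=L-1$ yields $(N-L)\,\det(G)/\det(\Delta(D_N))$, the first identity. The limit is legitimate because $\Phi^{*}D_N\Phi\ge d_{\mathrm{min}}I_L$ makes $\int_{\Omega_{L,N}}\mathrm{Tr}((\Phi^{*}D_N\Phi)^{s})\,d\phi$ finite and continuous in $s$ near $s=-1$.

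For the second identity I would pass from the isotropic matrix to the Gaussian one via the QR (Gram--Schmidt) factorization $X=\Phi C$ of the $N\times L$ Gaussian matrix: $\Phi$ is isotropically distributed on $\Omega_{L,N}$, is independent of the $L\times L$ factor $C$, and $X^{*}X=C^{*}C$. The cyclic property of the trace gives $\mathrm{Tr}((X^{*}D_N X)^{-1})=\mathrm{Tr}((CC^{*})^{-1}(\Phi^{*}D_N\Phi)^{-1})$. Right $U(L)$-invariance of the isotropic measure forces $\E_\Phi[(\Phi^{*}D_N\Phi)^{-1}]=c\,I_L$ with $c=\frac1L\int_{\Omega_{L,N}}\mathrm{Tr}((\Phi^{*}D_N\Phi)^{-1})\,d\phi$ (the averaged inverse commutes with every $V\in U(L)$, hence is scalar). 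Independence and linearity of the trace then give $\mu=c\cdot\E[\mathrm{Tr}((CC^{*})^{-1})]=c\cdot\E[\mathrm{Tr}((X^{*}X)^{-1})]$. Since $X^{*}X$ is a complex Wishart matrix, $\E[\mathrm{Tr}((X^{*}X)^{-1})]=L/(N-L)$ (see \cite{Ver}), so $\mu=\frac{1}{N-L}\int_{\Omega_{L,N}}\mathrm{Tr}((\Phi^{*}D_N\Phi)^{-1})\,d\phi=\det(G)/\det(\Delta(D_N))$, as claimed. As a consistency check, for $D_N=\alpha I_N$ this reduces to $\mu=L/(\alpha(N-L))$, matching $\Lambda_{L}(\alpha I_N)$ from the Remark.

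I expect the main obstacle to be exactly the $k=L-1$ term. The whole content of the result is that the $\log$ appears (from the order-$\epsilon$ term in the numerator $d_i^{\,s+p}=d_i^{\,p-1}e^{\epsilon\log d_i}$) together with the correct scalar $N-L$, and this only falls out after one verifies that the polar $\epsilon^{-1}$ part of the regularized row is a Vandermonde row and therefore contributes nothing to $\det(G_{L-1})$. The vanishing of the lower terms, the commutant argument for $\E_\Phi$, and the Wishart trace are all routine by comparison.
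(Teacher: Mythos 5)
Your proposal is correct and follows essentially the same path as the paper's proof: apply Theorem \ref{main} to $f(x)=x^{-1}$, kill the terms $k\le L-2$ because their modified rows are proportional to untouched Vandermonde rows, identify the surviving $k=L-1$ term with the logarithmic row and the prefactor $N-L$, and then obtain $\mu$ from the factorization $X=\Phi C$ together with $\E\big[\mathrm{Tr}((X^{*}X)^{-1})\big]=L/(N-L)$. Your two refinements---computing $I^{(N-L)}(x^{-1})$ by regularizing with $x^{s}$, $s\to -1$, and checking that the resulting pole sits on a Vandermonde row (where the paper simply asserts the value of $I^{(N-L)}(x^{-1})$), and replacing the paper's loosely stated trace-factorization property of independent matrices by the observation that right unitary invariance forces $\E_\Phi\big[(\Phi^{*}D_N\Phi)^{-1}\big]$ to be a scalar matrix---tighten individual steps but do not change the argument.
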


\begin{proof} 
Let $1\leq L<N$ and let us consider $f(x)=x^{-1}$ then for each $0\leq k\leq L-2$ we see that 
$$
I^{(N-L)}(x^{L-(k+1)} x^{-1})\r{x=d_{i}}=\frac{d_{i}^{N-(k+2)}}{(L-k-1)\ldots (N-k-2)}
$$ 
\noindent since this is a multiple of the $(k+2)^{th}$ row of the Vandermonde matrix $\Delta(D_N)$ we see that $\det(G_{k})=0$ for all $0\leq k\leq L-2$.\\ 

\noindent It is not difficult to see that for $k=L-1$  
$$I^{(N-L)}(x^{-1})\r{x=d_i}=\frac{d_{i}^{N-(L+1)}\log(d_{i})}{(N-(L+1))!}.$$ 

\noindent Therefore using Theorem \ref{main} we see that
\begin{equation}
\int_{\Omega_{L,N}}{\mathrm{Tr}((\Phi^{*}D_N\Phi)^{-1})\,d\phi}=(N-L)\cdot\frac{\det(G)}{\det(\Delta(D_N))}
\end{equation}

\noindent where $G$ is constructed by replacing the $L^{th}$ row of the Vandermonde matrix by the row 
$$\Big(d_{1}^{N-(L+1)}\log(d_1),\ldots,d_{N}^{N-(L+1)}\log(d_{N})\Big).$$

\vspace{0.3cm}
\noindent Now we will prove the second part of the Proposition. Given $X$ an $N\times L$ random matrix as in the hypothesis we can decompose $X$ as 
$$
X=\Phi C
$$
where $\Phi$ is an isotropically random $N\times L$ unitary and $C$ is a positive definite $L\times L$ independent from $\Phi$ (see Section 2.1.5 from \cite{Ver}). The matrix $C$ is invertible with probability 1. Hence,
$$
(X^{*}D_{N}X)^{-1}=(C\Phi^{*}D_{N}\Phi C)^{-1}=C^{-1}(\Phi^{*}D_{N}\Phi)^{-1}C^{-1}.
$$
Therefore, 
\begin{eqnarray*}
\E\Big[\mathrm{Tr}\big((X^{*}D_N X)^{-1}\big)\Big] & = & \E\Big[\mathrm{Tr}\big( C^{-1}(\Phi^{*}D_{N}\Phi)^{-1}C^{-1} \big)\Big]\\
& = & \E\Big[\mathrm{Tr}\big( C^{-2}(\Phi^{*}D_{N}\Phi)^{-1} \big)\Big]\\
\end{eqnarray*}
where in the second equality we used the trace property. Recall from random matrix theory that if $A,B$ are independent $n\times n$ random matrices then 
$$
\E\Big[\mathrm{Tr}(AB)\Big]=\frac{1}{n}\E\Big[\mathrm{Tr}(A)\Big]\E\Big[\mathrm{Tr}(B)\Big].
$$
Since the matrix $C$ is independent with respect to $\Phi$ and $D_{N}$ we conclude that 
$$
\E\Big[\mathrm{Tr}\big((X^{*}D_N X)^{-1}\big)\Big]
$$
is equal to
$$
\frac{1}{L}\E\Big[\mathrm{Tr}\big( C^{-2}\big)\Big] \E\Big[\mathrm{Tr}\big((\Phi^{*}D_{N}\Phi)^{-1} \big)\Big].
$$
Let $D_{N}=I_{N}$ then $(X^{*}D_{N}X)^{-1}=(X^{*}X)^{-1}=C^{-2}$ and it is known (see Lemma 2.10 \cite{Ver}) that 
$$
\E\Big[\mathrm{Tr}\big( C^{-2}\big)\Big]=\frac{L}{N-L}.
$$
Putting all the pieces together we see that  
\begin{equation}
\mu:=\E\Big[\mathrm{Tr}\big((X^{*}D_N X)^{-1}\big)\Big]=\frac{\det(G)}{\det(\Delta(D_N))}
\end{equation}
finishing the proof.
 \end{proof}

\subsection{Application of the Main Results to $\mathrm{cov}_{L}(D)$}

Using Theorem \ref{main} we can compute several averages over the Stiefel manifold. As an application let us compute 
\begin{equation}\label{cov}
\mathrm{cov}_{L}(D)=\int_{\Omega_{L,M}}{\Phi(\Phi^{*}D\Phi)\Phi^{*}\,d\phi}=\mathrm{diag}(t_1,\ldots,t_M). 
\end{equation}
\noindent First let us note that the same proof in Lemma \ref{lemma} gives us the following result.

\vspace{0.3cm}
\begin{lemma}\label{lemma2}
Let $f$ be a differentiable function in the interval $[d_{min},d_{max}]$ where $d_{min}=\min\{d_{i}\}$ and $d_{max}=\max\{d_{i}\}$. Then the following formula holds, 
\begin{equation}
\frac{\partial}{\partial d_{k}}
\int_{\Omega_{L,N}} {\mathrm{Tr}\Big(f(\Phi^{*}D_N\Phi)\Big)\,\,d\phi}
\end{equation}
is equal to
$$
\Bigg[\int_{\Omega_{L,N}}{\Phi f'(\Phi^{*}D_N\Phi)\Phi^{*}\,\,d\phi}\Bigg]_{kk}
$$
where $A_{kk}$ means the $(k,k)$ entry of the matrix $A$.
\end{lemma}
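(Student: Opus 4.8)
The plan is to differentiate under the integral sign and then to invoke the standard identity for the derivative of the trace of a matrix function. Since the Stiefel manifold $\Omega_{L,N}$ is compact, it carries finite total mass under the isotropic measure, and the integrand $\mathrm{Tr}(f(\Phi^{*}D_N\Phi))$ is jointly continuous in $\Phi$ and in the parameters $d_1,\ldots,d_N$, with a partial derivative in $d_k$ that is continuous and hence uniformly bounded over $\Omega_{L,N}$ for $d_k$ in a neighborhood of any fixed value. Dominated convergence then justifies interchanging $\partial/\partial d_k$ with the integral, so that
$$
\frac{\partial}{\partial d_{k}}\int_{\Omega_{L,N}}\mathrm{Tr}\big(f(\Phi^{*}D_N\Phi)\big)\,d\phi=\int_{\Omega_{L,N}}\frac{\partial}{\partial d_{k}}\mathrm{Tr}\big(f(\Phi^{*}D_N\Phi)\big)\,d\phi.
$$

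It remains to evaluate the integrand on the right. Write $A=\Phi^{*}D_N\Phi$, an $L\times L$ Hermitian matrix. Because $D_N=\mathrm{diag}(d_1,\ldots,d_N)$ depends linearly on $d_k$, we have $\partial D_N/\partial d_k=E_{kk}$, the $N\times N$ matrix whose only nonzero entry is a $1$ in position $(k,k)$, so that $\partial A/\partial d_k=\Phi^{*}E_{kk}\Phi$. The central step is the matrix-calculus identity
$$
\frac{\partial}{\partial d_{k}}\mathrm{Tr}\big(f(A)\big)=\mathrm{Tr}\Big(f'(A)\,\frac{\partial A}{\partial d_{k}}\Big),
$$
which holds even though $A$ and $\partial A/\partial d_k$ need not commute. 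I would first establish it for $f(x)=x^{n}$: by the product rule and the cyclic property of the trace, $\partial_{d_k}\mathrm{Tr}(A^{n})=\sum_{i=0}^{n-1}\mathrm{Tr}\big(A^{i}(\partial_{d_k}A)A^{n-1-i}\big)=n\,\mathrm{Tr}\big(A^{n-1}\partial_{d_k}A\big)$, matching the right-hand side since $f'(x)=nx^{n-1}$. Extending by linearity to polynomials and then to general $C^{1}$ functions by uniform approximation of $f$ and $f'$ on the spectrum of $A$, which lies in $[d_{\min},d_{\max}]$, completes this step.

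Substituting $\partial A/\partial d_k=\Phi^{*}E_{kk}\Phi$ and applying cyclicity of the trace once more gives
$$
\mathrm{Tr}\big(f'(A)\,\Phi^{*}E_{kk}\Phi\big)=\mathrm{Tr}\big(\Phi f'(\Phi^{*}D_N\Phi)\Phi^{*}\,E_{kk}\big)=\big[\Phi f'(\Phi^{*}D_N\Phi)\Phi^{*}\big]_{kk},
$$
where the last equality uses $\mathrm{Tr}(BE_{kk})=B_{kk}$ for any $N\times N$ matrix $B$. Since extracting the $(k,k)$ entry is a linear functional, it commutes with integration over $\Omega_{L,N}$, and we obtain
$$
\int_{\Omega_{L,N}}\big[\Phi f'(\Phi^{*}D_N\Phi)\Phi^{*}\big]_{kk}\,d\phi=\Bigg[\int_{\Omega_{L,N}}\Phi f'(\Phi^{*}D_N\Phi)\Phi^{*}\,d\phi\Bigg]_{kk},
$$
which is the claimed formula.

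The only delicate point is the trace-derivative identity for non-commuting $A$ and $\partial_{d_k}A$; everything else is an interchange of limits together with cyclicity of the trace. An alternative route to that identity, avoiding polynomial approximation, is first-order eigenvalue perturbation: diagonalizing $A=V\Lambda V^{*}$ with $\Lambda=\mathrm{diag}(\lambda_1,\ldots,\lambda_L)$, one has $\partial_{d_k}\lambda_j=(V^{*}\Phi^{*}E_{kk}\Phi V)_{jj}$ for almost every $\Phi$ (where the eigenvalues are distinct), so that $\partial_{d_k}\sum_j f(\lambda_j)=\sum_j f'(\lambda_j)\,\partial_{d_k}\lambda_j$ reproduces exactly $[\Phi f'(\Phi^{*}D_N\Phi)\Phi^{*}]_{kk}$, and the exceptional set of measure zero does not affect the integral.
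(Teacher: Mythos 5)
Your proof is correct and takes essentially the same route as the paper's: the paper proves this lemma by invoking the proof of its Lemma~\ref{lemma}, which likewise reduces by linearity and density to $f(x)=x^{n}$, differentiates the matrix power via the product rule, collapses the resulting sum with cyclicity of the trace, identifies $\mathrm{Tr}\big(B E_{k}\big)=B_{kk}$, and then averages. The extra care you take (dominated convergence to differentiate under the integral, and uniform approximation of both $f$ and $f'$ on $[d_{\min},d_{\max}]$) only fills in interchanges that the paper leaves implicit.
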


\vspace{0.4cm}
\noindent Let $D$ an $M\times M$ diagonal matrix of rank $N$ and let $L\leq N$. By equation (\ref{cov}) and Lemma \ref{lemma2} we know that
$$
t_{k}=\frac{1}{2}\cdot\frac{\partial}{\partial d_{k}}\int_{\Omega_{L,M}}{\mathrm{Tr}\Big((\Phi^{*}D\Phi)^{2}\Big)\,d\phi}.
$$
\noindent Since 
\begin{eqnarray*}
\int_{\Omega_{L,M}}{\mathrm{Tr}\Big((\Phi^{*}D\Phi)^{2}\Big)\,d\phi} & = & \frac{L}{M}\cdot\frac{L+1}{M+1}s_{(2,0)}(D)\\ 
& - & \frac{L}{M}\cdot\frac{L-1}{M-1}s_{(1,1)}(D)
\end{eqnarray*}
\noindent where 
$$
s_{(2,0)}(D)=\sum_{i=1}^{M}{d_{i}^{2}}+\sum_{i<j}{d_{i}d_{j}}
$$ 
and 
$$
s_{(1,1)}(D)=\sum_{i<j}{d_{i}d_{j}}.
$$
\noindent Then 
$$\frac{\partial s_{(2,0)(D)}}{\partial d_{k}}=\mathrm{Tr}(D)+d_{k}\quad\text{and}\quad\frac{\partial s_{(1,1)(D)}}{\partial d_{k}}=\mathrm{Tr}(D)-d_{k}.$$

\noindent Therefore, $t_{k}$ is equal to
$$
\frac{1}{2}\cdot\frac{L}{M}\Bigg[\frac{L+1}{M+1}\Big(d_{k}+\mathrm{Tr}(D)\Big)-\frac{L-1}{M-1}\Big(\mathrm{Tr}(D)-d_{k}\Big) \Bigg],
$$
hence
$$
t_{k} =\frac{L}{M}\Bigg[ \frac{M-L}{M^{2}-1}\mathrm{Tr}(D) + \frac{ML-1}{M^{2}-1}d_{k}\Bigg]. 
$$
Therefore,
\begin{eqnarray*}
\mathrm{cov}_{L}(D) & = & \int_{\Omega_{L,M}}{\Phi(\Phi^{*}D\Phi)\Phi^{*}\,d\phi}\\
& = &\frac{L}{M}\Bigg[ \frac{M-L}{M^{2}-1}\mathrm{Tr}(D)\cdot I_{M} + \frac{ML-1}{M^{2}-1}\cdot D\Bigg].
\end{eqnarray*}

\subsection{Application of the Main Results to $\mathrm{invcov}_{L}(D)$}
\par As we mention in the introduction and subsequent Sections the problem we are mainly interested is to compute $\mathrm{invcov}_{L}(D)$ when $D$ is an $M\times M$ diagonal matrix of rank $N$ and $L\leq N$. Let $D=\mathrm{diag}(D_{N},0_{M-N})$ where $D_{N}=\mathrm{diag}(d_{1},\ldots,d_{N})$. As we previously saw $\mathrm{invcov}_{L}(D)$ is a diagonal matrix that can be decomposed as 
$$
\mathrm{invcov}_{L}(D)=\mathrm{diag}(\Lambda_{L}(D_{N}),\mu I_{M-N})
$$
where 
$$
\mathrm{\Lambda}_{L}(D_{N})=\int_{\Omega_{L,N}}{\Phi(\Phi^{*}D_N\Phi)^{-1}\Phi^{*}\,d\phi}=\mathrm{diag}(\lambda_1,\ldots,\lambda_N)
$$
and 
$$
\mu=\mathrm{Tr}\Big[\E\big((X^{*}D_{N}X)^{-1}\big)\Big]
$$ 
where the average is taken over the ensemble of all $N\times L$ Gaussian random matrices $X$ with independent and complex entries with zero mean and unit variance.

 \vspace{0.2cm}
\noindent Using Lemma \ref{lemma2} we see that
$$
\lambda_{k}=\frac{\partial}{\partial d_{k}}\int_{\Omega_{L,N}}{\mathrm{Tr}\log(\Phi^{*}D_N\Phi)\,d\phi}.
$$
where 
$$
\int_{\Omega_{L,N}}{\mathrm{Tr}\log(\Phi^{*}D_N\Phi)\,d\phi}
$$
can be explicitly computed using Theorem \ref{main}. On the other hand, Proposition \ref{mainprop} gives us an explicit formula for $\mu$. Therefore, given $D$ we obtained close form expressions for all the entries of the matrix $\mathrm{invcov}_{L}(D)$ for every $M,N$ and $L$.

\subsection{Small Dimension Example}

\vspace{0.3cm}
In this subsection we will compute $\mathrm{invcov}_{L}(D)$ for a small dimensional example. Let $M=4$, $N=2$ and $L=1$ and
$D=\mathrm{diag}(d_1,d_2,0,0)$. Hence following our previous notation $D_{2}=\mathrm{diag}(d_1,d_2)$. Let us first consider the case $d_1=d_2=d$. In this case using equation (\ref{eq_d0}) and equation (\ref{eq_d1}) we see that
$$
\mathrm{invcov}_{1}(D)=\mathrm{diag}(d^{-1}/2,d^{-1}/2,d^{-1},d^{-1}).
$$
The more interesting case is $d_1\neq d_2$. Applying Theorem \ref{main} and Lemma \ref{lemma2} we see that
$$
\mathrm{invcov}_{1}(D)=\mathrm{diag}(\lambda_1,\lambda_2,\mu,\mu)
$$
where
\begin{eqnarray*}
\lambda_{i} & = & \frac{\partial}{\partial d_{i}}\int_{\Omega_{1,2}}{\mathrm{Tr}\log(\Phi^{*}D_2\Phi)\,d\phi}\\
& = & \frac{\partial}{\partial d_{i}} \Bigg(\frac{\mathrm{det}(H)}{\mathrm{det}(\Delta(D_2))}\Bigg)
\end{eqnarray*}
where 
$$
\Delta(D_{2}) = 
\Bigg[\begin{array}{ll}
d_1 & d_2 \\ 
1 & 1
\end{array}\Bigg]
$$
and 
$$
H = 
\Bigg[\begin{array}{ll}
d_{1}\log d_1-d_1 & d_{2}\log d_{2}-d_2  \\ 
1 & 1
\end{array}\Bigg].
$$
Doing the calculations we see that 
\begin{equation}
\lambda_{1}=\frac{d_{2}\log d_{2}-d_{2}\log d_{1}+d_1-d_2}{(d_1-d_2)^2}
\end{equation} 
and 
\begin{equation}
\lambda_{2}=\frac{d_{1}\log d_{1}-d_{1}\log d_{2}+d_2-d_1}{(d_1-d_2)^2}.
\end{equation} 
On the other hand using Proposition \ref{mainprop}
\begin{equation*}
\mu = \mathrm{Tr}\Big[\E((XD_{2}X^{*})^{-1})\Big]=\frac{\mathrm{det}(G)}{\mathrm{det}(\Delta(D_2))}
\end{equation*}
where 
$$
G = 
\Bigg[\begin{array}{ll}
\log d_1 & \log d_{2}\\ 
1 & 1
\end{array}\Bigg].
$$
Therefore,
\begin{equation}
\mu = \frac{\log d_1-\log d_2}{d_1-d_2}.
\end{equation}

\par To summarize, computation of $\mathrm{invcov}$ is facilitated by first taking the eigenvector and eigenvalue decomposition of the sample covariance matrix and then applying $\mathrm{invcov}$ to the diagonal eigenvalue matrix. At that point there are three alternatives: a straight Monte Carlo expectation, the asymptotic expressions (\ref{mu}) and (\ref{lambda_asymp}), or the closed form analytical expressions according to Proposition \ref{mainprop} and Theorem \ref{main}.

\section{Preliminaries on the Limit of Large Random Matrices}

\vspace{0.4cm}
\par A random matrix is a measurable map $X$, defined on some probability space $(\Omega,P)$ and which takes values in a matrix algebra, $M_{n}(\C)$ say.  In other words, $X$ is a matrix whose entries are (complex) random variables on $(\Omega,P)$. One often times identifies $X$ with the probability measure $X(P)$ it induces on $M_{n}(\C)$ and forgets about the underlying space $(\Omega,P)$. Random matrices appear in a variety of mathematical fields and in physics too. For a more complete and detailed discussion of random matrix limits and free probability see \cite{Voi2}, \cite{Voi1}, \cite{Speicher}, \cite{Speicher2} and \cite{Ver}. Here we will review only some key points.

\par Let us consider a sequence $\{A_{N}\}_{N\in\mathbb{N}}$ of self--adjoint $N\times N$ random matrices $A_{N}$. In which sense can we talk about the limit of these matrices? It is evident that such a limit does not exist as an $\infty\times\infty$ matrix and there is no convergence in the usual topologies. What converges and survives in the limit are the moments of the random matrices. Let $A=(a_{ij}(\omega))_{i,j=1}^{N}$ where the entries $a_{ij}$ are random variables on some probability space $\Omega$ equipped with a probability measure $P$. Therefore,
\begin{equation}
 \mathbb{E}(\mathrm{tr_{N}}(A_{N})):=\frac{1}{N}\sum_{i=1}^{N} \int_{\Omega}{a_{ii}(\omega)\,dP(\omega)}
\end{equation}
and we can talk about the $k$--th moment $\mathbb{E}(\mathrm{tr_{N}}(A_{N}^{k}))$ of our random matrix $A_{N}$, and it is well known that for nice random matrix ensembles these moments converge for $N\to\infty$. So let us denote by $\alpha_{k}$ the limit of the $k$--th moment,
\begin{equation}
\alpha_{k}:=\lim_{N\to\infty}{\mathbb{E}(\mathrm{tr_{N}}(A_{N}^{k}))}.
\end{equation}
Thus we can say that the limit consists exactly of the collection of all these numbers $\alpha_{k}$. However, instead of talking about a collection of numbers we prefer to identify these numbers as moments of some random variable $A$. Now we can say that our random matrices $A_{N}$ converge to a variable $A$ in distribution (which just means that the moments of $A_{N}$ converge to the moments of $A$). We will denote this by $A_{N}\to A$.

\par One should note that for a self--adjoint $N\times N$ matrix $A=A^{*}$, the collection of moments corresponds also to a probability measure $\mu_{A}$ on the real line, determined by 
\begin{equation}
 \mathrm{tr}_{N}(A^{k})=\int_{\mathbb{R}}{t^{k}\,d\mu_{A}(t)}.
\end{equation}
This measure is given by the eigenvalue distribution of $A$, i.e. it puts mass $\frac{1}{N}$ on each of the eigenvalues of $A$ (counted with multiplicity):
\begin{equation}
 \mu_{A}=\frac{1}{N}\sum_{i=1}^{N}{\delta_{\lambda_{i}}},
\end{equation}
where $\lambda_{1},\ldots,\lambda_{N}$ are the eigenvalues of $A$. In the same way, for a random matrix $A$, $\mu_{A}$ is given by the averaged eigenvalue distribution of $A$. Thus, moments of random matrices with respect to the averaged trace contain exactly that type of information in which one is usually interested when dealing with random matrices.

\vspace{0.3cm}
\begin{example} Let us consider the basic example of random matrix theory. Let $G_{N}$ be an $N\times N$ self-adjoint random matrix whose upper--triangular entries are independent zero-mean random variables with variance $\frac{1}{N}$ and fourth moments of order $O(\frac{1}{N^2})$. Then the famous Theorem of Wigner can be stated in our language as
\begin{equation}
 G_{N}\to s, \hspace{0.4cm}\text{where $s$ is a semicircular random variable},
\end{equation}
where semicircular just means that the measure $\mu_{s}$ is given by the semicircular distribution (or, equivalently, the even moments of the
variable $s$ are given by the Catalan numbers).
\end{example}

\vspace{0.3cm}
\begin{example}
Another important example in random matrix theory is the Wishart ensemble. Let $X$  be an $N\times K$ random matrix whose entries are independent zero-mean random variables with variance $\frac{1}{N}$ and fourth moments of order $O(\frac{1}{N^2})$. As $K,N\to\infty$ with $\frac{K}{N}\to\beta$,   
\begin{equation}
 X^{*}X\to x_{\beta},
\end{equation}
where $x_\beta$ is a random variable with the Marcenko--Pastur law $\mu_{\beta}$ with parameter $\beta$.
\end{example}
\vspace{0.3cm}

\par The empirical cumulative eigenvalue distribution function of an $N\times N$ self--adjoint random matrix $A$ is defined by the random function 
$$
F_{A}^{N}(\omega, x):=\frac{\#\{k\,:\,\lambda_{k}\leq x\}}{N}
$$
where $\lambda_{k}$ are the (random) eigenvalues of $A(\omega)$ for each realization $\omega$. For each $\omega$ this function determines a probability measure $\mu_{N}(\omega)$ supported on the real line. These measures $\{\mu_{N}(\omega)\}_{\omega}$ define a Borel measure $\mu_{N}$ in the following way. Let $B\subset\mathbb{R}$ be a Borel subset then
$$
\mu_{N}(B):=\mathbb{E}\Big(\mu_{N}(\omega)(B)\Big).
$$

\par A new and crucial concept, however, appears if we go over from the case of one variable to the case of more variables. 

\vspace{0.3cm}
\begin{definition}
Consider $N\times N$ random matrices $A_{N}^{(1)},\ldots,A_{N}^{(m)}$ and variables $A_{1},\ldots,A_{m}$ (living in some abstract algebra $\mathcal{A}$ equipped with a state $\varphi$). We say that 
\begin{equation*}
 (A_{N}^{(1)},\ldots,A_{N}^{(m)})\to (A_{1},\ldots,A_{m})
\end{equation*}
in distribution if and only if 
\begin{equation}
 \lim_{N\to\infty}{\mathbb{E}\Big(\mathrm{tr_{N}}\Big(A_{N}^{(i_1)}\cdots A_{N}^{(i_k)}\Big)\Big)}=\varphi(A^{(i_1)}\cdots A^{(i_k)})
\end{equation}
for all choices of $k$, $1\leq i_{1},\ldots, i_{k}\leq m$.
\end{definition}

\vspace{0.3cm}
The $A_{1},\ldots,A_{m}$ arising in the limit of random matrices are a priori abstract elements in some algebra $\mathcal{A}$, but it is good to know
that in many cases they can also be concretely realized by some kind of creation and annihilation operators on a full Fock space. Indeed,
free probability theory was introduced by Voiculescu for investigating the structure of special operator algebras generated by these type of
operators. In the beginning, free probability had nothing to do with random matrices.

\vspace{0.3cm}
\begin{example}
Let us now consider the example of two independent Gaussian random matrices $G_{N}^{(1)},G_{N}^{(2)}$ (i.e., each of them is a self--adjoint
Gaussian random matrix and all entries of $G_N^{(1)}$ are independent from all entries of $G_{N}^{(2)}$ ). Then one knows that all joint moments converge, and we can say that 
\begin{equation}
(G_{N}^{(1)},G_{N}^{(2)})\to (s_1,s_2),
\end{equation} 
where Wigner's Theorem tells us that both $s_1$ and $s_2$ are semicircular. The question is: What is the relation between $s_1$ and $s_2$? Does the independence between $G_N^{(1)}$ and $G_{N}^{(2)}$ survive in some form also in the limit? The answer is yes and is
provided by a basic theorem of Voiculescu which says that $s_1$ and $s_2$ are free. For a formal definition of freeness and more results in free probability see \cite{Voi2}, \cite{Voi1}, \cite{Speicher} and \cite{Speicher2}.
\end{example}

\section{Asymptotic results}\label{asymp}

\par In practical application when the values of $M$ and $N$ are too large the expressions of the previous Section could be difficult to evaluate and we need simpler mathematical formulas. It is for this reason that in this Section we will provide asymptotic results for the entries of $\mathrm{invcov}_{L}(D)$. As we previously saw $\mathrm{invcov}_{L}(D)$ is equal to 
\begin{equation*}
\Bigg[\begin{array}{ll}
\E(X^{*}(XD_{N}X^{*})^{-1}X) & 0 \\ 
0 & \E(Y^{*}(XD_{N}X^{*})^{-1}Y)\end{array}\Bigg]
\end{equation*}
\noindent where $D=\mathrm{diag}(D_{N},0_{M-N})$ and $D_{N}$ an $N\times N$ diagonal matrix of full rank. In this Section we will get asymptotic results for both terms $\E(X^{*}(XD_{N}X^{*})^{-1}X)$ and $\E(Y^{*}(XD_{N}X^{*})^{-1}Y)$ as $N,L\to\infty$ with $\lim_{N\to\infty}\frac{N}{L}=\beta$.

\vspace{0.3cm}
\noindent Recall that $\mu>0$ was defined as 
\begin{equation}
\mu=\mathrm{Tr}\Big[\E((XD_{N}X^{*})^{-1})\Big].
\end{equation}
As we already observed, if $X$ and $Y$ are Gaussian independent random matrices it is clear that 
\begin{eqnarray*}
\E(Y^{*}(XD_{N}X^{*})^{-1}Y) & = & \mathrm{Tr}\Big[\E((XD_{N}X^{*})^{-1})\Big]I_{M-N}\\
& = & \mu I_{M-N}.
\end{eqnarray*}
\noindent Let us introduce the $\eta$--transform of an $N\times N$ random matrix $A$ as, 
$$
\eta_A(\gamma):=\frac{1}{N}\mathrm{Tr}\Big[\E((1+\gamma A)^{-1})\Big]\quad \text{for}\quad \gamma>0.
$$
\noindent Analogously, given $\nu$ a probability measure on $\R$ we can define the $\eta$--transform of $\nu$ as 
$$
\eta_{\nu}(\gamma)=\int_{\R}{\frac{1}{1+\gamma t}d\nu(t)}.
$$
\noindent It is not difficult to see that 
$$
\lim_{\gamma\to\infty}{\eta_{\nu}(\gamma)}=\nu(\{0\})
$$ 
and 
$$
\lim_{\gamma\to\infty}{\gamma\eta_{\nu}(\gamma)}=\int_{\R}{t^{-1}d\nu(t)}.
$$ 
Consider $D=(D_{N})_{N\in\N}$ a collection of diagonal $N\times N$ diagonal matrices such that $D_{N}$ converge in distribution to a probability measure $\nu$, i.e. for all $k\geq 0$ 
$$
\lim_{N\to\infty}{\frac{1}{N}\E\Big(\mathrm{Tr}(D_{N}^{k})\Big)}=\int_{0}^{\infty}{t^{k}\,d\nu(t)}.
$$
\noindent Let $(X_{N})_{N\in\N}$ be a sequence of $L\times N$ complex Gaussian random matrices with standard i.i.d. entries. Then by Theorem 2.39 in \cite{Ver} we know that  $\frac{1}{L}X_{N}D_{N}X_{N}^{*}$ converges in distribution (moreover, almost surely) as as $N,L\to\infty$ with $\lim_{N\to\infty}\frac{N}{L}=\beta$ to a probability distribution whose $\eta$--transform satisfies 
\begin{equation}\label{eta}
\beta(1-\eta_{\nu}(\gamma\eta))=1-\eta(\gamma) 
\end{equation}
\noindent where $\eta_{\nu}$ is the $\eta$--transform of the limit in distribution of $D=(D_{N})_{N\in\N}$. Note that $\lim_{\gamma\to\infty}{\eta(\gamma)}=0$ and $\lim_{\gamma\to\infty}{\gamma\eta(\gamma)}=\mu$. Therefore, taking limit as $\gamma$ goes to infinity on both sides of equation (\ref{eta}) we get 
\begin{equation}\label{eta2}
\eta_{\nu}(\mu)=\frac{\beta-1}{\beta}
\end{equation}

\noindent since by definition and the convergence in distribution for $N$ sufficiently large we have that
\begin{equation}\label{eq-eta}
\eta_{\nu}(\mu)\approx\frac{1}{N}\mathrm{Tr}\Big[\E(1+\mu D_{N})^{-1}\Big] 
\end{equation}
and
$$
\frac{N-L}{N}\approx\frac{\beta-1}{\beta}.
$$
\noindent The symbol $\approx$ denotes that equality holds in the limit.

\vspace{0.3cm}
\noindent Using equation (\ref{eta2}) and equation (\ref{eq-eta}) we obtain
\begin{equation}\label{mu}
\frac{N-L}{N}\approx \frac{1}{N}\sum_{k=1}^{N}{\frac{1}{1+\mu d_{k}}}
\end{equation}

\noindent where $D_{N}=\mathrm{diag}(d_{1},\ldots,d_{N})$. Note that this equation implicitly and uniquely defines $\mu>0$.  

\vspace{0.3cm}

\begin{remark}
\begin{enumerate}
	\item If $N=L$ then $\mu=\infty$ (which is not surprising).
	\item If $d_{1}=\ldots=d_{N}=\alpha$ then $\mu=\alpha^{-1}\mathrm{Tr}\,\E((XX^{*})^{-1})$. Using Lemma 2.10 from \cite{Ver} we know that $\mathrm{Tr}\,\E((XX^{*})^{-1})=\frac{L}{N-L}$. Therefore for this case
	\begin{equation}\label{eq_d1}
	\mu=\frac{L}{\alpha(N-L)}
	\end{equation}
	which agrees exactly (without the approximation) with equation (\ref{mu}). 
\end{enumerate}
 
\end{remark}

\par Now we will compute the asymptotics of the other, and more complicated, term $$\Lambda_{L}(D_{N})=\E(X^{*}(XD_{N}X^{*})^{-1}X)=\mathrm{diag}(\lambda_{1},\ldots,\lambda_{N}).$$

\vspace{0.3cm}
\begin{lemma}\label{lemma}
Let $f$ be a differentiable function on the interval $[d_{min},d_{max}]$ where $d_{min}=\min\{d_{i}\}$ and $d_{max}=\max\{d_{i}\}$. Then the following formula holds 
\begin{equation}
\frac{\partial}{\partial d_{k}}\E\Big[\mathrm{Tr}(f(XDX^{*}))\Big]=\E\Big[X^{*}f'(XDX^{*})X\Big]_{kk}. 
\end{equation}
\end{lemma}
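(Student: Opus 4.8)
The plan is to differentiate under the expectation and then invoke the chain rule for the trace of a matrix function. Since the random matrix $X$ does not depend on the parameter $d_k$, the first step is to interchange $\partial/\partial d_k$ with $\E$, reducing the claim to the deterministic identity
\begin{equation*}
\frac{\partial}{\partial d_{k}}\mathrm{Tr}\big(f(XDX^{*})\big)=\big[X^{*}f'(XDX^{*})X\big]_{kk}
\end{equation*}
holding for each fixed realization of $X$; taking $\E$ of both sides then yields the lemma.

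For the deterministic identity I would write $A:=XDX^{*}$ and note that, since $D=\mathrm{diag}(d_1,\ldots,d_N)$, the dependence on $d_k$ is linear, with
$$\frac{\partial A}{\partial d_{k}}=X\,E_{kk}\,X^{*}=x_{k}x_{k}^{*},$$
where $E_{kk}$ is the $N\times N$ matrix unit and $x_k$ is the $k$-th column of $X$. The key fact is the general rule $\partial_t\,\mathrm{Tr}(f(A(t)))=\mathrm{Tr}\big(f'(A(t))\,\partial_t A(t)\big)$ for a $C^{1}$ function $f$ and a smooth Hermitian-matrix curve $A(t)$. I would first verify this for monomials $f(x)=x^{n}$: expanding $\partial_t\,\mathrm{Tr}(A^{n})=\sum_{j=1}^{n}\mathrm{Tr}(A^{j-1}(\partial_t A)A^{n-j})$ and using cyclicity of the trace collapses the sum to $n\,\mathrm{Tr}(A^{n-1}\partial_t A)=\mathrm{Tr}(f'(A)\partial_t A)$; by linearity this extends to all polynomials, and then to $C^{1}$ functions by approximating $f$ and $f'$ uniformly on an interval containing the spectrum of $A$. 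Substituting $\partial_{d_k}A=x_kx_k^{*}$ and applying cyclicity once more,
$$\mathrm{Tr}\big(f'(A)\,x_kx_k^{*}\big)=\mathrm{Tr}\big(X^{*}f'(A)X\,E_{kk}\big)=\big[X^{*}f'(A)X\big]_{kk},$$
which is exactly the deterministic identity.

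The main obstacle is the justification of the two analytic steps rather than the algebra. The eigenvalues of $A=XDX^{*}$ are not confined to $[d_{\min},d_{\max}]$ (only a Rayleigh-quotient compression such as $\Phi^{*}D_N\Phi$ has that property, which is why Lemma \ref{lemma2} is the clean Stiefel analogue), so the uniform polynomial approximation must be carried out on an interval adapted to the random spectrum of $A$, and the differentiation under $\E$ must be justified by dominated convergence using the finiteness of all Gaussian moments of the entries of $X$. The delicate case is the singular choice $f'(x)=x^{-1}$ (equivalently $f=\log$) needed to recover $\Lambda_L(D_N)=\E\big(X^{*}(XD_NX^{*})^{-1}X\big)$: here integrability of $\mathrm{Tr}\big((XD_NX^{*})^{-1}\big)$ and of the difference quotients near small eigenvalues relies essentially on the assumption $L<N$, which keeps $XD_NX^{*}$ away from singularity in an integrable sense. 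I expect this integrability control to be the technical heart of the argument, with the matrix calculus being routine once it is in place.
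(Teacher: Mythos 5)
Your proof is correct and takes essentially the same route as the paper's: reduce to monomials $f(x)=x^{n}$ by linearity and density, expand $\partial_{d_k}(XDX^{*})^{n}$ by the product rule, collapse it with cyclicity of the trace, and identify $\mathrm{Tr}\big(f'(XDX^{*})\,x_{k}x_{k}^{*}\big)=\big[X^{*}f'(XDX^{*})X\big]_{kk}$ via the matrix unit $E_{kk}$. The only difference is that you flag (and the paper silently skips) the analytic justifications --- interchanging $\partial_{d_k}$ with $\E$, carrying out the polynomial approximation on an interval containing the actual spectrum of $XDX^{*}$ (which, as you correctly note, need not lie in $[d_{\min},d_{\max}]$ since $X$ is Gaussian rather than an isometry), and the integrability of $\mathrm{Tr}\big((XD_{N}X^{*})^{-1}\big)$ when $L<N$ --- so your version is, if anything, more complete than the paper's.
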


\vspace{0.4cm}

\begin{proof}
It is enough to prove this Lemma for the case $f$ is a polynomial and by linearity it is enough to prove it for the case $f(x)=x^{n}$. Let $E_{k}$ be the $N\times N$ matrix whose entries are all zero except the entry $E_{kk}$ which is equal to 1. Note that $\frac{\partial}{\partial d_{k}}(D)=E_{k}$. If $f(x)=x^{n}$ then $f(XDX^{*})=(XDX^{*})^{n}$ and for each $k$ it is easy to see that
$$
\frac{\partial}{\partial d_{k}}(XDX^{*})^{n}=\sum_{i=0}^{n-1}{(XDX^{*})^{i}XE_{k}X^{*}(XDX^{*})^{n-(i+1)}}.
$$
Therefore, 
\begin{eqnarray*}
\mathrm{Tr}\Bigg( \frac{\partial}{\partial d_{k}}f(XDX^{*})\Bigg) & = & n\mathrm{Tr}\Big( XE_{k}X^{*}(XDX^{*})^{n-1}\Big)\\
& = & \mathrm{Tr}\Big( E_{k}X^{*}f'(XDX^{*})XE_{k}\Big)\\
& = & \Big( X^{*}f'(XDX^{*})X\Big)_{kk}
\end{eqnarray*}
where we use the trace property and the fact that $E_{k}^{2}=E_{k}$. Taking expectation on both sides we finish the proof.
\end{proof}
\vspace{0.3cm}
\noindent As a corollary we obtain.
\vspace{0.3cm}
\begin{cor} If 
$$
\Lambda_{L}(D_{N})=\E(X^{*}(XD_{N}X^{*})^{-1}X)=\mathrm{diag}(\lambda_{1},\ldots,\lambda_{N})
$$
then $\lambda_{k}=\frac{\partial}{\partial d_{k}}\,\E\Big[\mathrm{Tr\,\,log}(XD_{N}X^{*}) \Big]$. 
\end{cor}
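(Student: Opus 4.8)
The plan is to read off the corollary from Lemma~\ref{lemma} by specializing $f$ to the logarithm. Taking $f(x)=\log x$ gives $f'(x)=x^{-1}$, so that $f'(XD_{N}X^{*})=(XD_{N}X^{*})^{-1}$ in the functional calculus; this is legitimate because $XD_{N}X^{*}$ is positive definite with probability one (as $L\leq N$ and $D_{N}$ has full rank). Substituting into Lemma~\ref{lemma} yields
\begin{equation*}
\frac{\partial}{\partial d_{k}}\,\E\big[\mathrm{Tr}\,\log(XD_{N}X^{*})\big]=\E\big[X^{*}(XD_{N}X^{*})^{-1}X\big]_{kk}.
\end{equation*}
To finish I would identify the right-hand side with $\lambda_{k}$: the subscript $kk$ extracts the $(k,k)$ entry of the matrix $\E[X^{*}(XD_{N}X^{*})^{-1}X]=\Lambda_{L}(D_{N})$, and since $\Lambda_{L}(D_{N})=\mathrm{diag}(\lambda_{1},\ldots,\lambda_{N})$ is diagonal, that entry is exactly $\lambda_{k}$. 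This gives the asserted identity $\lambda_{k}=\frac{\partial}{\partial d_{k}}\,\E[\mathrm{Tr}\,\log(XD_{N}X^{*})]$.

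The one genuine obstacle is that Lemma~\ref{lemma} was proved only for polynomials $f$, whereas $\log$ is neither a polynomial nor uniformly approximable by polynomials on the (random, unbounded, and possibly near-zero) spectrum of $XD_{N}X^{*}$, so a naive density argument does not apply directly. I would instead reprove the needed instance, mirroring the polynomial computation in Lemma~\ref{lemma} but handling $\log$ head-on: writing $\mathrm{Tr}\,\log(XD_{N}X^{*})=\log\det(XD_{N}X^{*})$ and using $\partial(XD_{N}X^{*})/\partial d_{k}=XE_{k}X^{*}$ together with Jacobi's formula $\partial\log\det M=\mathrm{Tr}(M^{-1}\partial M)$, one obtains pointwise
\begin{equation*}
\frac{\partial}{\partial d_{k}}\log\det(XD_{N}X^{*})=\mathrm{Tr}\big[(XD_{N}X^{*})^{-1}XE_{k}X^{*}\big]=\big(X^{*}(XD_{N}X^{*})^{-1}X\big)_{kk},
\end{equation*}
where $E_{k}$ is the matrix unit from the proof of Lemma~\ref{lemma}.

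It then remains to justify differentiating under the expectation, and this is where I expect the main work. I would observe that $X^{*}(XD_{N}X^{*})^{-1}X=D_{N}^{-1/2}PD_{N}^{-1/2}$, where $P=W^{*}(WW^{*})^{-1}W$ with $W=XD_{N}^{1/2}$ is an orthogonal projection and hence has all entries bounded by $1$ in modulus (since $P^{2}=P=P^{*}$ forces $P_{ii}=\sum_{j}|P_{ij}|^{2}\le 1$). Consequently the integrand has entries bounded by the deterministic constant $d_{\min}^{-1}$, uniformly in $X$, so dominated convergence permits the interchange of $\partial/\partial d_{k}$ and $\E$. Granting this standard regularity fact, taking expectations in the displayed pointwise identity and invoking that $\Lambda_{L}(D_{N})$ is diagonal completes the proof.
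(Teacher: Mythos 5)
Your proposal is correct, and its core move is exactly the paper's: the corollary is obtained by specializing Lemma~\ref{lemma} to $f(x)=\log x$, so that $f'(x)=x^{-1}$ and the $(k,k)$ entry of the diagonal matrix $\Lambda_L(D_N)$ is read off. The paper offers no further argument beyond this specialization. Where you genuinely depart from the paper is in recognizing that this citation is not airtight: the proof of Lemma~\ref{lemma} is purely algebraic and covers only $f(x)=x^n$, and the passage to $f=\log$ cannot be dismissed by density of polynomials, since the spectrum of $XD_NX^{*}$ is random, unbounded, and can come arbitrarily close to $0$, so uniform polynomial approximation on a fixed compact interval gives no control of $f(XD_NX^{*})$. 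Your patch is the right one: prove the needed instance directly, writing $\mathrm{Tr}\,\log(XD_NX^{*})=\log\det(XD_NX^{*})$ and applying Jacobi's formula pointwise in $X$ (valid almost surely, since $X$ has full row rank with probability one), and then justify the interchange of $\partial/\partial d_k$ with $\E$ via the projection bound $X^{*}(XD_NX^{*})^{-1}X=D_N^{-1/2}PD_N^{-1/2}$ with $\lvert P_{ij}\rvert\le 1$, which dominates the difference quotients uniformly in $X$ for perturbations of $d_k$ in a fixed neighborhood. In short, the paper's route is shorter but inherits an unaddressed rigor gap from the lemma; yours is self-contained for the case actually needed and closes that gap. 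The only refinement worth making explicit is that the domination must be applied to the difference quotients, i.e.\ the bound $\bigl(X^{*}(XD'X^{*})^{-1}X\bigr)_{kk}\le (d_k')^{-1}$ should be invoked uniformly over diagonal matrices $D'$ with $d_k'$ ranging in a compact neighborhood of $d_k$ bounded away from zero, which your argument supplies with trivial modification.
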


\vspace{0.3cm}
\par Before continuing let us define the Shannon transform of a probability distribution. Given $\nu$ a probability distribution supported in $\R$ we define 
$$\vartheta_{\nu}(\gamma)=\int_{\R}{\log(1+\gamma t)d\nu}.$$

\noindent It is easy to see that $\lim_{\gamma\to\infty}{\vartheta_{\nu}(\gamma)-\log(\gamma)}=\int_{\R}{\log(t)d\nu}$.

\vspace{0.3cm}
\noindent Consider, as before, $D=(D_{N})_{N\in\N}$ a collection of diagonal $N\times N$ diagonal matrices such that $D_{N}$ converge in distribution to a probability measure $\nu$. Let $(X_{N})_{N\in\N}$ be a sequence of $L\times N$ complex Gaussian random matrices with standard i.i.d. entries. Then by equation 2.167 in \cite{Ver} we know that  $\frac{1}{L}X_{N}D_{N}X_{N}^{*}$ converges in distribution (moreover, almost surely) as as $N,L\to\infty$ with $\lim_{N\to\infty}\frac{N}{L}=\beta$ to a probability distribution $\eta_{\beta}$ whose Shannon--transform $\vartheta$ satisfies 
\begin{equation}\label{Shannon}
\vartheta(\gamma)=\beta \vartheta_{\nu}(\gamma\eta(\gamma))-\log\eta(\gamma)+\eta(\gamma)-1.
\end{equation}

\noindent Subtracting $\log(\gamma)$ on both sides and taking the limit $\gamma\to\infty$ we obtain
\begin{equation}
\int_{0}^{\infty}{\log(t)\,d\nu_{\beta}}=\beta \vartheta_{\nu}(\mu)-\log(\mu) -1. 
\end{equation}

\noindent For $N$ and $L$ sufficiently large 
\begin{equation}
\int_{0}^{\infty}{\log(t)\,d\nu_{\beta}}=\lim_{L\to\infty}{\frac{1}{L}\E\Big[\mathrm{Tr\,\,log}\Big(\frac{1}{L}X_{N}D_{N}X_{N}^{*}\Big) \Big]}
\end{equation}
$$
\approx\frac{1}{L}\Big(\E\Big[\mathrm{Tr\,\,log}{(X_{N}D_{N}X_{N}^{*})}\Big] -L\log(L)\Big)\nonumber
$$
\noindent and 
\begin{eqnarray}
\beta\vartheta_{\nu}(\mu) & = & \beta\lim_{N\to\infty}{\frac{1}{N}\E\Big[\mathrm{Tr\,\,log}(1+\mu D_{N}) \Big]}\\
& \approx & \frac{\beta}{N}\sum_{k=1}^{N}{\log(1+\mu d_{k})}. \nonumber
\end{eqnarray}
\noindent Hence, for $N$ sufficiently large 
$$\E\Big[\mathrm{Tr\,\,log}(X_{N}D_{N}X_{N}^{*})\Big]\approx \sum_{k=1}^{N}{\log(1+\mu d_{k})}+L\log\Big(\frac{L}{e\mu}\Big).$$

\noindent Taking partial derivatives on both sides with respect to $d_{k}$ we obtain
\begin{equation}\label{lambda_asymp}
\lambda_{k} \approx \frac{\partial\mu}{\partial d_{k}}\cdot\sum_{i=1}^{N}{\frac{d_i}{1+\mu d_i}} + \frac{\mu}{1+\mu d_{k}} - \frac{L}{\mu}\frac{\partial \mu}{\partial d_{k}}. 
\end{equation}

\section{Simulations}\label{simulations}
\par In this Section we present some of the simulations performed. Let $\Sigma$ be a $100\times 100$ true covariance matrix with Toeplitz with entries $\Sigma_{i,j}=\exp(-\frac{1}{3}|i-j|)$. Assume we take $N=50$ measurements and we want recover $\Sigma$ to the best of our knowledge. After performing the measurements we construct the sample covariance matrix $K_{x}$. In Figure \ref{Fig1} and \ref{Fig2} we can see the eigenvalues of the true matrix, the eigenvalues of the sample covariance matrix (raw data) and the eigenvalues of the new matrix after performing the randomization to the sample covariance for different values of $L$. In other words we are comparing the eigenvalues of $\Sigma$, $K_{x}$ and of $\mathrm{invcov}_{L}(K_{x})^{-1}$ for different values of $L$. Recall that $\mathrm{invcov}_{L}(K_{x})$ is an estimate for $\Sigma^{-1}$ and therefore $\mathrm{invcov}_{L}(K_{x})^{-1}$ is an estimate for $\Sigma$.
We also performed the same experiment for the case that the true covariance matrix is a multiple of the identity. This corresponds to the case in which the sensors are uncorrelated. See Figure \ref{Fig3} and \ref{Fig4}. 
\begin{figure}[!Ht]
  \begin{center}
    \includegraphics[width=3.4in, height=4.0in]%
{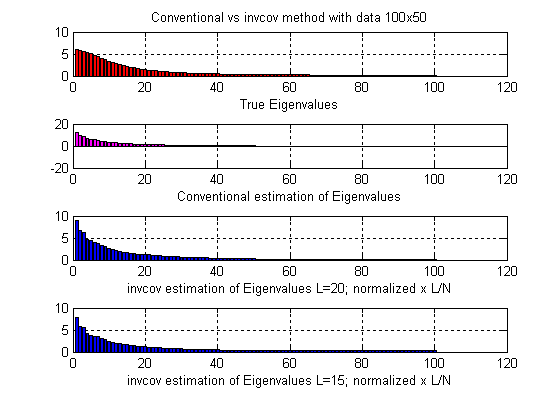}
    \caption{Comparison of the eigenvalues of the true covariance matrix and sample covariance matrix vs. $\mathrm{invcov}$ estimate. The conventional eigenvalue estimate gives 50 eigenvalues that are precisely zero, which is not correct. Our $\mathrm{invcov}$ estimate is much closer to the actual eigenvalue distribution.}
    \label{Fig1}
  \end{center}
\end{figure}

\begin{figure}[!Ht]
  \begin{center}
    \includegraphics[width=3.4in, height=4.0in]%
{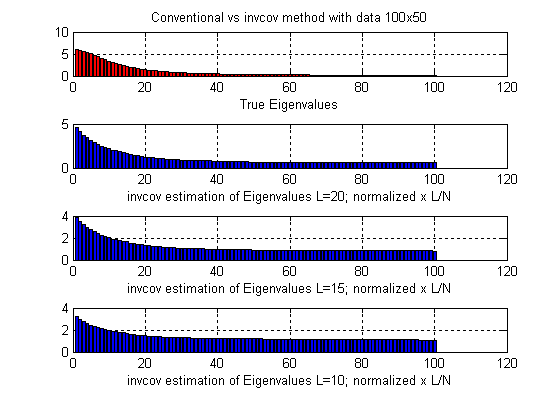}
    \caption{Comparison of the eigenvalues of the true covariance matrix and sample covariance matrix vs. $\mathrm{invcov}$ estimate}
    \label{Fig2}
  \end{center}
\end{figure}

\begin{figure}[!Ht]
  \begin{center}
    \includegraphics[width=3.4in, height=4.0in]%
{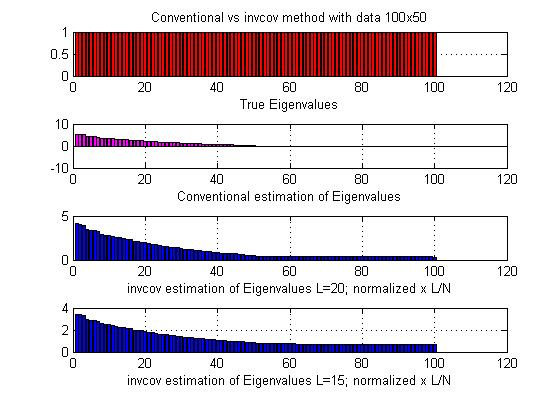}
    \caption{Comparison of the eigenvalues of the true covariance matrix and sample covariance matrix vs. $\mathrm{invcov}$ estimate}
    \label{Fig3}
  \end{center}
\end{figure}

\begin{figure}[!Ht]
  \begin{center}
    \includegraphics[width=3.4in, height=4.0in]%
{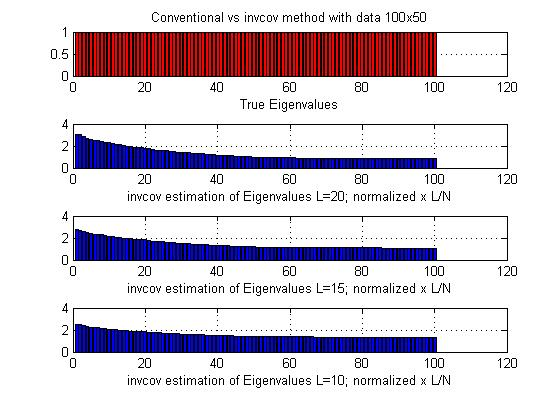}
    \caption{Comparison of the eigenvalues of the true covariance matrix and sample covariance matrix vs. $\mathrm{invcov}$ estimate}
    \label{Fig4}
  \end{center}
\end{figure}



\vspace{0.3cm}
\par Let $A$ be an $N\times N$ matrix we define the Frobenius norm as $\norm{A}_{2}=\frac{1}{N^{2}}\mathrm{Tr}(A^{*}A)$. In Figure \ref{Fig7}, \ref{Fig8} and \ref{Fig9} we compare the performance of our estimator with the more standard estimator of Ledoit and Wolf \cite{LW}. The experiment was performed with a $60\times 60$ true covariance matrix $\Sigma$  with Toeplitz entries $\Sigma_{i,j}=\exp(-\frac{1}{\beta}|i-j|)$ and $N=30$. We compute the Frobenius norm $\norm{\Sigma-\mathrm{invcov}_{L}(K_{x})^{-1}}_{2}$ for the different values of $L$ and we compare it with $\norm{\Sigma-K_{LW}}_{2}$. We use $\beta=10, 50$ and $100$. We can see that our method outperform Ledoit and Wolf for a big range of the parameter $L$. We also note that in the three cases $L=20$ is the optimum.

\begin{figure}[!Ht]
  \begin{center}
    \includegraphics[width=3.4in, height=3.2in]%
{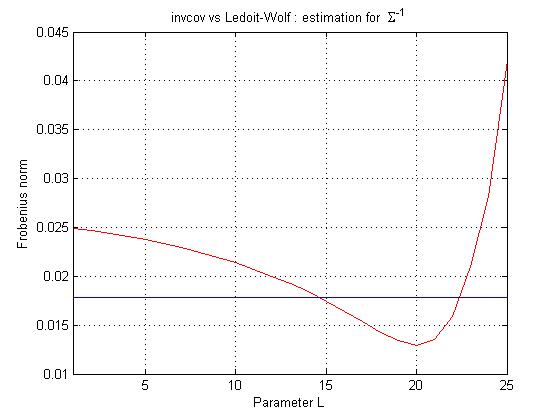}
    \caption{Performance comparison with respect to Ledoit and Wolf for $\beta=10$. The horizontal blue line is the performance of the LW estimate, our $\mathrm{invcov}$ estimate outperforms LW for $L$ between 15 and 22.}
    \label{Fig7}
  \end{center}
\end{figure}

\begin{figure}[!Ht]
  \begin{center}
    \includegraphics[width=3.4in, height=3.2in]%
{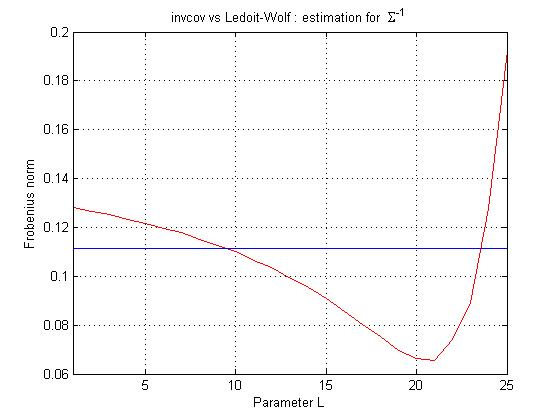}
    \caption{Performance comparison with respect to Ledoit and Wolf for $\beta=50$}
    \label{Fig8}
  \end{center}
\end{figure}

\begin{figure}[!Ht]
  \begin{center}
    \includegraphics[width=3.4in, height=3.2in]%
{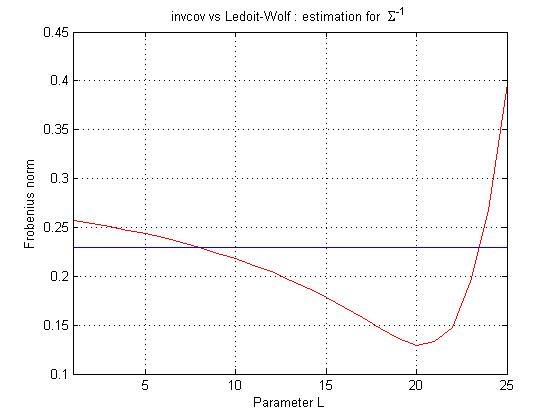}
    \caption{Performance comparison with respect to Ledoit and Wolf for $\beta=100$}
    \label{Fig9}
  \end{center}
\end{figure}

\section{Comments}
Our investigation indicates that the new method presented in this article is interesting and promising from the 
mathematical point of view, as well as the practical one. Even though we were able to find the asymptotics formulas 
as well as close form expressions for $\mathrm{invcov}_{L}$, and more generally for $\mathrm{fcov}_{L}$, both estimates for $\Sigma^{-1}$
and $f(\Sigma)$, there is a natural question still unanswered. How to find the optimum $L$? For instance assume we
know that the matrix $\Sigma$ is an $M\times M$ Toeplitz distributed with entries 
$\Sigma_{i,j}=\exp(-\frac{1}{\beta}|i-j|)$ with unknown $\beta$. How does the optimum $L$ depends on
$M$ and $N$? In the simulations presented in Section \ref{simulations} we see that for $M=60$, $N=30$ the optimum $L$ is equal
to 20 and it seems to be independent on the value of $\beta$. We believe that this a question interesting to explore.

\section{Acknowledgments}
We would like to thank Thomas Bengtsson and Yuliy Baryshnikov for many helpful discussions and comments.

\appendices
\section{Average over the permutation matrices}
In this Appendix we study what happen if we substitute the average over the isotropically random unitary matrices by the set of permutation matrices. Let $D$ be an $N\times N$ diagonal matrix with $D=\mathrm{diag}(d_{1},\ldots,d_{N})$ and $L\leq N$. We will denote $e_{p}$ the orthonormal vector $1\times N$ in $\C^{N}$ whose entries are zero except at the $p^{\,th}$--coordinate that is 1. A permutation matrix is a $L\times N$ matrix of the form
$$\Phi=\begin{pmatrix}
           e_{p_1} \\
           e_{p_2} \\
           \vdots \\
           e_{p_{L}} \\
         \end{pmatrix}\hspace{0.7cm} \text{where} \hspace{0.4cm} p_{i}\neq p_{j}\hspace{0.4cm} \text{for}\hspace{0.4cm} i\neq j.$$

\vspace{0.3cm}
\begin{theorem} Let $D$ be a diagonal matrix as before. Then
$$\E\,\Big[\Phi^{*}(\Phi D \Phi^{*})^{-1}\Phi\,\,:\,\, \Phi\,\,\,\,L\times N\,\, \mathrm{permutation}\Big ]=\frac{L}{N}\,D^{-1}.$$
\end{theorem}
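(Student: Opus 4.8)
The plan is to exploit that permutation matrices act on the diagonal matrix $D$ in an extremely simple way, reducing the entire statement to a symmetry/counting argument rather than any genuine matrix computation.

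First I would observe that a single $L\times N$ permutation matrix $\Phi$ with rows $e_{p_1},\ldots,e_{p_L}$ merely selects the coordinates $p_1,\ldots,p_L$. Consequently $\Phi D\Phi^{*}=\mathrm{diag}(d_{p_1},\ldots,d_{p_L})$ is itself diagonal, and invertible whenever the selected $d_{p_i}$ are nonzero, so that $(\Phi D\Phi^{*})^{-1}=\mathrm{diag}(d_{p_1}^{-1},\ldots,d_{p_L}^{-1})$ with no honest matrix inversion required.

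Next I would compute the projected-out matrix $\Phi^{*}(\Phi D\Phi^{*})^{-1}\Phi$ directly from its entries. Since $\Phi_{i,k}=1$ exactly when $k=p_i$, a short index calculation gives $\big(\Phi^{*}(\Phi D\Phi^{*})^{-1}\Phi\big)_{k,l}=\delta_{kl}\,d_{k}^{-1}\,\mathbf{1}[k\in\{p_1,\ldots,p_L\}]$; equivalently $\Phi^{*}(\Phi D\Phi^{*})^{-1}\Phi=\sum_{i=1}^{L}d_{p_i}^{-1}E_{p_i}$, where $E_p$ is the elementary diagonal matrix introduced earlier. Thus for each realization the estimator is diagonal, agreeing with $D^{-1}$ on the $L$ selected coordinates and vanishing on the remaining $N-L$.

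Finally I would take the expectation, interpreting it as the uniform average over the $\binom{N}{L}$ choices of the unordered index set $\{p_1,\ldots,p_L\}$. By symmetry a fixed coordinate $k$ lies in a randomly chosen $L$-subset with probability $\binom{N-1}{L-1}/\binom{N}{L}=L/N$, so the $(k,k)$ entry of the average equals $(L/N)\,d_{k}^{-1}$ while all off-diagonal entries remain zero; assembling these entries yields exactly $\tfrac{L}{N}D^{-1}$. I do not expect a real obstacle here, since permutations effectively diagonalize the problem; the only point needing care is to make the probabilistic model precise -- namely that the expectation is a uniform average over $L$-element index sets, with the ordering of $p_1,\ldots,p_L$ irrelevant -- and then to carry out the inclusion-probability count $L/N$ correctly.
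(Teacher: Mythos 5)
Your proposal is correct and follows essentially the same route as the paper's own proof: both reduce $\Phi D\Phi^{*}$ to $\mathrm{diag}(d_{p_1},\ldots,d_{p_L})$, write the projected-out matrix as $\sum_{i=1}^{L}d_{p_i}^{-1}E_{p_i}$, and finish with a counting argument showing each coordinate is selected with probability $L/N$. The only cosmetic difference is that you average over unordered $L$-subsets while the paper counts ordered row tuples, which yields the identical ratio.
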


\vspace{0.3cm}
\begin{proof}
\noindent We will first show that 
$$
\Phi D \Phi^{*}=\mathrm{diag}(\,d_{p_1},d_{p_2},\ldots,d_{p_{L}}).
$$ 
Given $1\leq p\leq N$ then $e_{p}\cdot D=d_{p}\,e_{p}$. Therefore,
\begin{eqnarray*}
\Phi D \Phi^{*} & = & \left(
                    \begin{array}{c}
                      d_{p_1}e_{p_1}\\
                      d_{p_2}e_{p_2}\\
                      \vdots \\
                      d_{p_{L}}e_{p_{L}} \\
                    \end{array}
                  \right)\left(
                           \begin{array}{cccc}
                             e_{p_1}^{T} & e_{p_2}^{T} & \ldots & e_{p_L}^{T} \\
                           \end{array}
                         \right)\\ 
& = & \mathrm{diag}(\,d_{p_1},d_{p_2},\ldots,d_{p_{L}}).
\end{eqnarray*}

\noindent Hence,
$$
(\Phi D \Phi^{*})^{-1}=\mathrm{diag}(\,d_{p_1}^{-1},d_{p_2}^{-1},\ldots,d_{p_{L}}^{-1}).
$$

\noindent Thus, 
\begin{equation}
\Phi^{*}(\Phi D \Phi^{*})^{-1}\Phi=\sum_{i=1}^{L}{d_{p_i}^{-1}e_{p_i}^{T}e_{p_i}}.
\end{equation}

\noindent Note that $E_{p}:=e_{p}^{T}e_{p}$ is the matrix that has all entries zero except at the $(p,p)$ entry which is 1. The total number of different permutation matrices is $\frac{N!}{(N-L)!}$. Therefore,
$$\E\,[\Phi^{*}(\Phi D \Phi^{*})^{-1}\Phi]=\frac{(N-L)!}{N!}\cdot\sum_{(p_1,\ldots,p_{L})}{\Bigg [ \sum_{i=1}^{L}{d_{p_i}^{-1} E_{p_i}}\Bigg ]}$$

\noindent where the first sum is running over all the possible permutation matrices. The number of permutation matrices that have $e_{p}$ as one of their rows is $\frac{(N-1)!L!}{(L-1)!(N-L)!}$. Hence,
\begin{eqnarray*}
\E[\Phi^{*}(\Phi D \Phi^{*})^{-1}\Phi] & = & \frac{(N-L)!}{N!}\frac{(N-1)!L!}{(L-1)!(N-L)!}\cdot D^{-1}\\
& = & \frac{L}{N}\,D^{-1}.
\end{eqnarray*}
\end{proof}

\end{document}